\def\Z{{   \mathbb Z }}
\def\Q{{\mathbb Q}}
\def\F{{\mathbb F}}
\def\FF{{\mathcal{F}}}
\def\ch{{\mathrm{ch}}}
\def\Freq{{\mathrm{Freq}}}
\def\Meas{{\mathrm{Meas}}}
\def\Ssg{Schur $\sigma$-group\xspace}
\def\Ssgs{Schur $\sigma$-groups\xspace}
\def\Ssag{Schur $\sigma$-ancestor group\xspace}
\def\Ssags{Schur $\sigma$-ancestor groups\xspace}
\def\Spsg{Schur$+1$ $\sigma$-group\xspace}
\def\Spsag{Schur$+1$ $\sigma$-ancestor group\xspace}
\def\Spsgs{Schur$+1$ $\sigma$-groups\xspace}
\def\Spsags{Schur$+1$ $\sigma$-ancestor groups\xspace}
\def\Measc{{\mathrm{Meas}_c}}
\def\Rbar{\overline{R}}
\def\magma{{\sc Magma}}
\def\pari{{\sc PARI/GP}}
\def\G{{G}}
\def\ab{{\mathrm{ab}}}
\def\Gal{\mathrm{Gal}}
\def\Cl{\mathrm{Cl}}
\def\Epi{{ \mathrm{Epi}}}
\def\Aut{{\mathrm{Aut}}}
\newtheorem{theorem}{Theorem}[section]
\newtheorem{conjecture}[theorem]{Conjecture}
\newtheorem{thm-fr}{Th\'eor\`eme}[section]
\newtheorem{lem-fr}{Lemme}[section]
\newtheorem{cor-fr}{Corollaire}[section]
\newtheorem{lemma}[theorem]{Lemma}
\newtheorem{prop-fr}[theorem]{Proposition}
\theoremstyle{remark}
\newtheorem{remark}[theorem]{Remark}
\newtheorem{rem-fr}[theorem]{Remarque}
\newtheorem{example}[theorem]{Example}
\newtheorem{definition}[theorem]{Definition}
\newtheorem{def-fr}[theorem]{D\'efinition}
\begin{document}

\title{Heuristics for $p$-class Towers of Real Quadratic Fields}

\author{Nigel Boston}
\address{Department of Mathematics, University of Wisconsin - Madison,  
480 Lincoln Drive, Madison, WI 53706, USA}
\email{boston@math.wisc.edu}

\author{Michael R. Bush}
\address{Department of Mathematics,
Washington and Lee University,
204 W.\ Washington Street,
Lexington, VA 24450, USA}
\email{bushm@wlu.edu}

\author{Farshid Hajir}
\address{Department of Mathematics \& Statistics, University of Massachusetts - Amherst, 710 N. Pleasant Street, Amherst, MA 01003, USA}
\email{hajir@math.umass.edu}

\thanks{The research of NB is supported by Simons grant MSN179747. The work of the MRB was partially supported by summer Lenfest Grants from Washington and Lee University.}

\subjclass[2010]{11R29, 11R11}
\keywords{Cohen-Lenstra heuristics, class field tower, real quadratic field, ideal class group, \Ssg, \Spsg}

\begin{abstract}
{Let $p$ be an odd prime. For a number field $K$, we let $K_\infty$ be the maximal unramified pro-$p$ extension of $K$; we call the group $\Gal(K_\infty/K)$ the $p$-class tower group of $K$. In a previous work, as a non-abelian generalization of the work of Cohen and Lenstra on ideal class groups, we studied how likely it is that a given finite $p$-group occurs as the $p$-class tower group of an imaginary quadratic field. Here we do the same for an arbitrary real quadratic field $K$ as base.  As before, the action of $\Gal(K/\Q)$  on the $p$-class tower group of $K$ plays a crucial role; however, the presence of units of infinite order in the ground field significantly complicates the possibilities for groups that can occur. 
We also sharpen our results in the imaginary quadratic field case by removing a certain hypothesis, using ideas of Boston and Wood. In an appendix, we show how the probabilities introduced for finite $p$-groups can be extended in a consistent way to the infinite pro-$p$ groups which can arise in both the real and imaginary quadratic settings.}
\end{abstract}

\maketitle

\section{Introduction}

In the 1980s, Cohen and Lenstra gave a theoretical framework for the variation of class groups of quadratic fields.  
The Cohen-Lenstra idea is twofold: the first part is to identify, in any relevant number-theoretical situation, the correct collection of groups which can arise as the groups of number-theoretical interest; the second part is to define a natural measure or probability distribution on this collection.
The heuristic then is that the probability attached to the group in the identified collection is the same as the frequency of occurrence as a group of number-theoretical interest.

In \cite{BBH}, we initiated the study of a natural non-abelian extension of Cohen and Lenstra's work.  Fix an odd prime $p$.
For a quadratic field $K$, we consider the Galois group $G_K$ of the maximal unramified $p$-extension of $K$. Note that the maximal abelian quotient $G_K^{\ab}$ is isomorphic to the $p$-class group of $K$ by class field theory.  We will call $G_K$ the $p$-class tower group of $K$, since that is how it first arose in the 1930s in the work of Artin, Hasse, Furtwangler and others.  In \cite{BBH}, we treated the case of imaginary quadratic fields.  The content of \cite{BBH} included a) an identification of the ``right'' collection of groups (\Ssgs), b) an investigation of an associated measure giving the frequency of groups within that collection, and c) a numerical study of $p$-class tower groups of imaginary quadratic fields to test the conjecture we developed using a) and b).  In this work, we treat real quadratic fields in the same manner.  As is to be expected, the presence of units of infinite order in the base field has a marked influence on the structure of $G_K$, and this makes some aspects of the current work slightly more complicated and more interesting than in \cite{BBH}.  

The organization of the paper is as follows. In Section~2, we define \Spsgs and introduce certain measures for these groups and also special finite quotients which we call \Spsags. We note that defining measures for the latter happens first and is then used in defining measures for the former.  In Section~3, we state the main heuristic, to the effect that a finite \Spsg occurs as the $p$-class tower group of a real quadratic field with frequency according to its measure.  In Section~4, we introduce IPADs and their associated measures. This provides a way for us to indirectly test our conjectures since computing the full Galois group $G_K$ is usually difficult unless the group is small. We then compare our theoretical predictions with numerical data in Section~5. We close with an Appendix in which we address assigning measures to infinite groups, an issue we skirted around in~\cite{BBH} and also in the main body of this paper. This is primarily of theoretical interest since any kind of direct test of our conjectures in the context of infinite groups seems out of reach currently.


\section{Schur$+1$ $\sigma$-groups}

\subsection{Preliminaries}
Fix an odd prime $p$ and a positive integer $g$. Let $F$ be the free pro-$p$ group on $g$ generators $x_1,...,x_g$. 
For a pro-$p$ group $G$, recall that $d(G)=\dim_{\F_p} H^1(G,\F_p)$ and $r(G)=\dim_{\F_p} H^2(G,\F_p)$ are its minimal number of (topological) generators and relations, respectively.  

In \cite{KV}, Koch and Venkov defined the notion of a  Schur $\sigma$-group of rank $g$. We recall its definition.  

\begin{definition}
A GI-automorphism (Generator-Inverting automorphism) of $G$ is an element $\sigma \in \Aut(G)$ of order $2$ such that $\sigma$ acts as inversion on $G^{\mathrm{ab}}$.  
\end{definition}

\begin{definition}
A finitely presented pro-$p$ group $G$ is called a {\em  Schur $\sigma$-group of rank $g$} if it satisfies:
1) $G^{\ab}$ is finite; 2) $(d(G),r(G))=(g,g)$; and 3) there exists a GI-automorphism
$\sigma$ of $G$. 
\end{definition}

Koch and Venkov were motivated to make the above definition through their study of the properties of the Galois group of the maximal unramified $p$-extension of an imaginary quadratic field.  Similar considerations for real quadratic fields lead us to the following definition.

\begin{definition}
A finitely presented pro-$p$ group $G$ is called a {\em  \Spsg of rank $g$} if it satisfies:
1) $G^{\ab}$ is finite; 2) $(d(G), r(G))=(g,g)$ or $(g,g+1)$; and 3) there exists a GI-automorphism
$\sigma$ of $G$ which acts as inversion on $H^2(G,\F_p)$.
\end{definition}

\begin{remark}
We note that an automorphism $\sigma \in \Aut(G)$ of order 2 is a GI-automorphism if and only if it acts by inversion on $H^1(G,\F_p)$ (see \cite{Boston-inventiones}).  Thus an alternative, perhaps more natural, formulation of 3) above is: $3'$) there exists $\sigma \in \Aut(G)$ of order 2 which acts as inversion on 
$H^i(G,\F_p)$ for $i=1,2$.
\end{remark}

\begin{lemma}
The Galois group $G_K=\Gal(L/K)$ of the maximal unramified $p$-extension $L$ of a real quadratic field $K$ is a  \Spsg of rank $g$ where $g$ is the $p$-rank of the class group of $K$.
\end{lemma}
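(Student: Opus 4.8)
The plan is to verify the three defining conditions of a Schur$+1$ $\sigma$-group for $G_K=\Gal(L/K)$, drawing on the standard arithmetic of the maximal unramified $p$-extension together with genus theory for the real quadratic field $K$.

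First I would establish condition 1). By class field theory, $G_K^{\ab}\cong\Cl(K)(p)$, the $p$-part of the class group of $K$, which is finite; this also identifies $d(G_K)=\dim_{\F_p}\Cl(K)/\Cl(K)^p=g$, the $p$-rank of the class group, settling the generator count in condition 2).

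The main work is the relation count in condition 2). The standard tool is the relation between $r(G_K)$ and the cohomology of the maximal unramified $p$-extension, computed via a Tate/Poitou-style local-global analysis. For a number field $K$ with $r_1$ real and $r_2$ complex places, one has an inequality (or equality) of the shape
\[
d(G_K)\le r(G_K)\le d(G_K)+r_1+r_2 - (\text{correction from units and class group}).
\]
For a real quadratic field $r_1=2$, $r_2=0$, and the free rank of the unit group is $1$; feeding this into the relevant Euler-characteristic computation (as in the formulas of Koch or Shafarevich) one finds that the defect between $r(G_K)$ and $d(G_K)$ is governed by whether a certain unit (a fundamental unit, or more precisely the image of $-1$ and the fundamental unit in a local-global unit quotient) is a $p$-th power locally everywhere. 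This forces $r(G_K)-d(G_K)\in\{0,1\}$, i.e. $(d(G_K),r(G_K))=(g,g)$ or $(g,g+1)$. I expect this to be the hard part: pinning down exactly why the extra relation can appear for real quadratic fields (in contrast to the imaginary case, where it is always $(g,g)$) requires carefully tracking the contribution of the infinite-order unit, which is precisely the arithmetic phenomenon the paper emphasizes.

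Finally, for condition 3) I would produce the GI-automorphism from the action of $\Gal(K/\Q)=\langle\tau\rangle$ on $G_K$. Since $L/\Q$ is Galois (as $L$ is characteristically defined over $K$ and $K/\Q$ is Galois), $\tau$ lifts to an automorphism $\sigma$ of $G_K$; after adjusting by an inner automorphism one arranges $\sigma^2=1$. That $\sigma$ acts as inversion on $G_K^{\ab}\cong\Cl(K)(p)$ follows because complex conjugation acts as $-1$ on the $p$-class group of a real quadratic field (the ambiguous class number / Gauss genus theory input, using that the $+1$ eigenspace of $\tau$ on the class group of $K$ comes from $\Q$, whose class number is prime to $p$). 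By the Remark, acting as inversion on $H^1(G_K,\F_p)\cong(G_K^{\ab}/p)^\vee$ already makes $\sigma$ a GI-automorphism; the additional requirement that $\sigma$ act as inversion on $H^2(G_K,\F_p)$ is the new feature in the Schur$+1$ setting, and I would deduce it from the $\tau$-module structure of the relation space, again using that $\tau$ acts by $-1$ on the relevant cohomological invariants attached to the real quadratic field.
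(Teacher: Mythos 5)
Your handling of conditions 1) and 2), and of the inversion on $G_K^{\ab}$, follows essentially the paper's route: class field theory identifies $G_K^{\ab}$ with the $p$-class group (giving finiteness and $d(G_K)=g$), the Shafarevich bound controls the relation rank, and the norm argument (for an integral ideal $\mathfrak{a}$, the product $\mathfrak{a}\mathfrak{a}^\sigma$ is an ideal of $\Z$, hence principal) gives inversion on the class group. One correction to your account of the relation count: the lemma needs only the clean Shafarevich inequality $0 \le r(G_K) - d(G_K) \le \dim_{\F_p} U_K/U_K^p$, and Dirichlet's Unit Theorem gives $\dim_{\F_p} U_K/U_K^p = 1$ outright, since $U_K \cong \{\pm 1\} \times \Z$ and $p$ is odd. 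No analysis of whether a fundamental unit is a $p$-th power locally is required; that finer question would matter only if you wanted to decide which of $(g,g)$ or $(g,g+1)$ actually occurs, which the lemma does not ask.

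The genuine gap is condition 3) at the level of $H^2$. You say you would deduce inversion on $H^2(G_K,\F_p)$ ``from the $\tau$-module structure of the relation space,'' but that module structure is exactly the thing to be proved, and nothing in your proposal computes it. This is where the paper does real work: by a theorem of Shafarevich (see Koch, or Kisilevsky--Labute) there is a $\Delta$-equivariant injection $H^2(G_K,\F_p) \hookrightarrow \Hom(V/(K^\times)^p, \F_p)$, where $\Delta = \Gal(K/\Q)$ and $V = \{ a \in K^\times \mid \langle a \rangle = \mathfrak{A}^p \text{ for some fractional ideal } \mathfrak{A} \}$, together with a $\Delta$-equivariant exact sequence
\[ 1 \rightarrow U_K/U_K^p \rightarrow V/(K^\times)^p \rightarrow \Cl_K[p] \rightarrow 1, \]
which splits because $p$ is odd. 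This reduces inversion on $H^2(G_K,\F_p)$ to inversion on the two pieces: on $\Cl_K[p]$ it is already known, and on $U_K/U_K^p$ it holds because $u u^\sigma = N_{K/\Q}(u) \in \{\pm 1\} \subseteq U_K^p$ for $p$ odd. This unit computation is the crux of the whole lemma: it is precisely the point where the infinite-order unit of the real field threatens to create a $\sigma$-fixed line in $H^2$ (which would violate condition 3)), and your proposal contains no mechanism to rule that out. Without the Shafarevich embedding, or some equivalent arithmetic description of $H^2(G_K,\F_p)$, the claim that $\sigma$ inverts $H^2$ is unsupported.
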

\begin{proof}
To ease the notation slightly, let us put $G=G_K$.
When $G$ is a finite $p$-group, this result has been observed by Schoof \cite{Schoof} (see especially Lemma 4.1). By working with appropriate cohomology groups, as in the work of Kisilevsky and Labute~\cite{KL} for extensions of CM fields, we now show the statement still holds in the infinite case.  

For condition 1), by class field theory, $G^{\ab}$ is isomorphic to the $p$-class group of $K$, hence it is finite.  Furthermore, the generator rank of $G$ is equal to the generator rank of this abelian $p$-group which gives part of condition 2). 
The  relation rank part of condition 2) comes from the fundamental estimate of Shafarevich (see \cite{Koch}, \cite{NSW}, or \cite{Shafarevich}), for the partial Euler characteristic of $G$, namely
\[
0\leq r(G)-d(G) \leq d(U_K/U_K^p),
\]
where $U_K=\mathcal{O}_K^\times$ is the unit group of $K$. We note that $d(U_K/U_K^p)=1$ by Dirichlet's Unit Theorem.   That a lift $\sigma$
of the non-trivial element of $\Gal(K/\Q)$ to the $p$-Hilbert class field of $K$ acts on $G_K^{\ab}$ by inversion can be seen from the fact that, if it did not, then it would have to act on some quotient of $G^{\ab}$ trivially, leading to an unramified $p$-extension of $\Q$, which does not exist.  Alternatively, by Artin reciprocity, the action of this lift of $\sigma$ on $G_K^{\ab}$ can be read off from the action of $\sigma$ on the ideal class group of $K$, which is via inversion because for any integral ideal $\frak{a}$ of $K$,  the product $\frak{a} \frak{a}^\sigma$ is principal, being an ideal of $\Z$. 

Let $\Delta$ and $\Gamma$ be the Galois groups of $K/\Q$ and $L/\Q$ respectively. We then have $\Delta \cong \Gamma/G$. The action of $\Delta$ on $H^2(G,\F_p)$ can be understood using the existence of a $\Delta$-equivariant injection
\[ H^2(G,\F_p) \hookrightarrow \hom(V/(K^\times)^p, \mathbb{F}_p) \cong V/(K^\times)^p \]
where $V$ consists of the elements $a \in K^\times$ satisfying $\langle a \rangle = \mathfrak A^p$ for some fractional ideal $\mathfrak{A}$ in $\mathcal{O}_k$.
This follows from work of Shafarevich. See~\cite[Section X.7]{KL} for a more detailed description. 

We also have an exact sequence
\[  1 \rightarrow U_K/U_K^p \rightarrow V/(K^\times)^p \rightarrow  \Cl_K[p] \rightarrow 1 \]
where $\Cl_K[p]$ consists of the ideal classes in $\Cl_K$ of order $p$ (see~\cite[Section~11.2]{Koch}). It is straightforward to verify that all of the maps above are $\Delta$-equivariant. Since $p$ is odd, the sequence splits and we obtain an $\mathbb{F}_p[\Delta]$-module injection
\[  H^2(G,\F_p)  \hookrightarrow U_K/U_K^p \oplus \Cl_K[p]. \]
We have already noted that $\sigma$ acts as inversion on $\Cl_K$ and so as inversion on the subgroup $\Cl_K[p]$.  We also see that $\sigma$ acts as inversion on $U_K/U_K^p$ since for all $u\in U_K$, we have $u u^\sigma \in \{\pm 1\} \subseteq U_K^p$ thanks to $p$ being odd.  Hence, $\sigma$ acts as inversion on $H^2(G,\F_p)$. 
\end{proof}

Given a finitely presented pro-$p$ group $G$ with GI-automorphism $\sigma$, we define
\[ X(G, \sigma) = \{s \in G \mid \sigma(s)=s^{-1}\} \]
and
\[ Y(G, \sigma) = \{s \in G \mid \sigma(s)=s\}. \]
As noted in~\cite{BBH}, different choices of the GI-automorphism are always conjugate, hence the sizes of these sets depends only on $G$. We let $y(G) = |Y(G, \sigma)|$.
For any such $G$, one can always find a presentation in which the generators lie in $X(F,\sigma)$ and the relations lie in $X = X(\Phi(F), \sigma)$ where $\Phi(F)$ is the Frattini subgroup of $F$ and $\sigma$ is an automorphism on $F$ that inverts the generating set. i.e. $\sigma(x_i) = x_i^{-1}$ for all $i$.  In general, when we refer to the GI-automorphism of a free group $F$ we shall always mean this particular GI-automorphism.

As in~\cite{BBH}, we will be working with certain special quotients of   \Spsgs by terms in a central series whose definition we now recall. For a pro-$p$ group $G$,  the {\em lower $p$-central series of $G$} is defined by  $P_0(G) = G$ and $P_{n+1}(G) = [G,P_n(G)]P_n(G)^p$ for $n \geq 0$. If $P_n(G) = 1$ for $n = c$ but not $n < c$ then we say that $G$ has $p$-class $c$. We use the notation $G_c$ to denote the quotient $G/P_c(G)$ which we call the {\em maximal $p$-class $c$ quotient of $G$}. 
If $G$ is a finitely generated pro-$p$ group then $G_c$ is a finite $p$-group. The $p$-class of $G_c$ is at most $c$ but is not necessarily equal to $c$. Equality holds if and only if $G$ has $p$-class at least $c$. 

The subgroups in the lower $p$-central series are characteristic so any GI-automorphism $\sigma$ on $G$ induces a GI-automorphism on the quotient $G_c$ for all $c \geq 1$. In particular, the GI-automorphism $\sigma$ on $F$ induces a GI-automorphism on $F_c$ which we will also denote $\sigma$. We let
\[ X_c = X(\Phi(F_c),\sigma) = \{s \in \Phi(F_c)  \mid  \sigma(s) = s^{-1} \}. \]

If $G$ and $H$ are pro-p groups with $H$ of $p$-class $c$ and $G_c \cong H$ then we say that $G$ is a {\em descendant of $H$} (or that $H$ is an {\em ancestor of $G$}). If $G$ has $p$-class $c+1$ then we say that $G$ is an {\em immediate descendant}. If $G$ is a  \Spsg then we refer to every finite quotient $G_c$ as a {\em  \Spsag}. Note that a finite \Spsg $G$ will itself be referred to as a \Spsag since $P_c(G) = 1$ and $G_c = G$ once $c$ is sufficiently large.

In~\cite{O}, an algorithm is described for enumerating all immediate descendants of a given finite $p$-group and we make use of this in Section~\ref{section-ipad-predictions}. Various quantities related to the algorithm are defined in terms  of abstract presentations in~\cite{O}. There are no problems however if one chooses to work with pro-$p$ presentations as we do. For further discussion of this point see~\cite[Remark~2.4]{BBH}.


\subsection{Measures on $p$-groups}

Let $G$ be a finite $p$-group of $p$-class $c$ with $d(G) = g$ and $r(G) = g$ or $g+1$. One can see that $G$ is a quotient of $F_{c'}$ for all $c' \geq c$. We will say that the tuple of elements $v = (t_1,\ldots,t_{g+1}) \in\Phi(F_{c'})^{g+1}$ {\em presents $G$} if $F_{c'} / \langle v \rangle \cong G$ where $ \langle v \rangle$ denotes the closed normal subgroup of $F_{c'}$ generated by $t_1,\ldots,t_{g+1}$.  We let $S_{c'} = S_{c'}(G)$ denote the set of all such tuples in $\Phi(F_{c'})^{g+1}$. 

When $G$ is a \Spsag we wish to consider tuples of relations satisfying an additional restriction. To explain this further, we need two lemmas. For the proofs of these lemmas and several other results in this section we will simply refer to~\cite{BBH} since the presence of an extra relation in the tuple has no effect on the arguments.

\begin{lemma}\label{phi-map1}
For all  $d \geq 1$, we have $X_d = X_d'$ where 
\[ X_d' =  \{t^{-1}\sigma(t) \mid  t \in \Phi(F_d)\}. \]
Hence, for all $g \geq 1$, the map $\phi_{d}:
\Phi(F_{d})^{g+1} \rightarrow X_{d}^{g+1}$ defined by 
\[(t_1,\ldots,t_{g+1}) \mapsto (t_1^{-1} \sigma(t_1),\ldots, t_{g+1}^{-1}\sigma(t_{g+1}))\] 
is surjective.  Indeed, for each  $w
  \in X_{d}^{g+1}$, the fiber $\phi_{d}^{-1}(w)$ is a coset of $Y_{d}^{g+1}$ in $\Phi(F_{d})^{g+1}$ where $Y_{d} = Y(F_{d},\sigma)$.
\end{lemma}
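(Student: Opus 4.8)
The plan is to deduce the whole statement from one clean fact about an involution acting on a finite odd-order $p$-group. Set $P=\Phi(F_d)$ and let $\tau$ be the restriction of $\sigma$ to $P$; this is legitimate because $\Phi(F_d)$ is characteristic in the finite $p$-group $F_d$, and $\tau^2=1$. Writing $\theta\colon P\to P$ for the map $\theta(t)=t^{-1}\tau(t)$, the lemma will follow once I show (i) the image of $\theta$ is exactly $X(P,\tau)=\{s\in P:\tau(s)=s^{-1}\}=X_d$, and (ii) the nonempty fibers of $\theta$ are the cosets $Y_d\,t$ of the fixed subgroup $Y(P,\tau)$. Before starting I would record that $Y_d=Y(F_d,\sigma)$ is contained in $\Phi(F_d)$, and so coincides with $Y(P,\tau)$: any $\sigma$-fixed element of $F_d$ maps to a $\sigma$-fixed, hence (since $\sigma$ inverts $F_d^{\ab}$ and $p$ is odd) trivial, element of $F_d/\Phi(F_d)$. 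This also guarantees that the cosets in (ii) lie inside $\Phi(F_d)$, as the statement requires.

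The inclusion $X_d'\subseteq X_d$ and the fiber description are routine, and I would dispatch them first. For the former, if $s=t^{-1}\sigma(t)$ with $t\in\Phi(F_d)$ then $s\in\Phi(F_d)$, and a direct computation using $\sigma^2=1$ gives $\sigma(s)=\sigma(t)^{-1}t=s^{-1}$, so $s\in X_d$. For the fibers, $\theta(t)=\theta(t')$ rearranges to $t't^{-1}=\sigma(t't^{-1})$, i.e. $t't^{-1}\in Y_d$, so the fiber through $t$ is exactly the coset $Y_d\,t$; in particular every nonempty fiber has size $|Y_d|$ and $|\mathrm{im}\,\theta|=|P|/|Y_d|$.

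The crux, and the step I expect to be the real obstacle, is the reverse inclusion $X_d\subseteq X_d'$: every element inverted by $\sigma$ must be realized as $t^{-1}\sigma(t)$. My intended approach is to pass to the finite group $H=P\rtimes\langle\sigma\rangle$ of order $2|P|$, whose Sylow $2$-subgroups have order $2$ since $|P|$ is odd. Every element of order $2$ in $H$ lies in the coset $P\sigma$, and $p\sigma$ is an involution exactly when $p\in X_d$, so $X_d$ is identified with the set of involutions of $H$. As all Sylow $2$-subgroups are conjugate and each contains a unique involution, all involutions of $H$ are conjugate; a short computation shows that conjugating $\sigma$ by an arbitrary element $g\sigma^{\varepsilon}$ of $H$ has the same effect as conjugating by $g\in P$. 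Hence the $P$-conjugacy orbit of $\sigma$ already exhausts $\{s\sigma:s\in X_d\}$, and since conjugating $\sigma$ by $g$ produces $(g\sigma(g)^{-1})\sigma$, this says precisely that every $s\in X_d$ has the form $g\sigma(g)^{-1}=t^{-1}\sigma(t)$ with $t=g^{-1}$. An acceptable alternative, avoiding the Sylow language, is induction on $|P|$ using a $\sigma$-stable central subgroup of order $p$ (which exists because $\sigma$ acts on the $\F_p$-space $\Omega_1(Z(P))$ with eigenvalues $\pm1$), but the conjugacy argument is cleaner and delivers the fiber count at the same time.

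With $X_d=X_d'$ in hand, the remaining assertions are formal. The map $\phi_d$ acts as $\theta$ in each of the $g+1$ coordinates, so its surjectivity onto $X_d^{g+1}$ is immediate, and its fiber over $w=(w_1,\ldots,w_{g+1})$ is the product $\prod_i\theta^{-1}(w_i)=\prod_i Y_d\,t_i$, which is the coset $Y_d^{\,g+1}(t_1,\ldots,t_{g+1})$ of $Y_d^{\,g+1}$ inside $\Phi(F_d)^{g+1}$, the containment using $Y_d\subseteq\Phi(F_d)$ recorded at the outset.
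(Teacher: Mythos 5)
Your proof is correct, but its central step takes a genuinely different route from the paper's. The paper gives no self-contained argument here: it defers to \cite[Lemma~2.5]{BBH} (the extra $(g+1)$-st coordinate changes nothing), and the proof there settles the one nontrivial inclusion $X_d \subseteq X_d'$ by an explicit computation exploiting odd order: given $s$ with $\sigma(s)=s^{-1}$, take $t$ to be the power of $s$ satisfying $t^{2}=s^{-1}$ (possible since $s$ has odd order); then $\sigma(t)=t^{-1}$ because $t\in\langle s\rangle$, and hence $t^{-1}\sigma(t)=t^{-2}=s$. You instead pass to the semidirect product $H=\Phi(F_d)\rtimes\langle\sigma\rangle$, identify $X_d$ with the set of involutions of $H$, and invoke Sylow conjugacy of the order-$2$ subgroups to conclude that every involution is $\Phi(F_d)$-conjugate to $\sigma$, which unwinds to $X_d\subseteq X_d'$. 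Both arguments are sound, and your handling of the easy inclusion, of the containment $Y_d\subseteq\Phi(F_d)$ (needed for the coset statement to make sense; you are more explicit about this point than the source), and of the fibers coincides with the cited proof. The trade-off: the paper's computation is shorter and produces an explicit preimage for each $s$, while your argument exposes the structure behind the lemma---it is precisely the triviality of the (nonabelian) first cohomology of $\langle\sigma\rangle$ acting on an odd-order group, i.e.\ conjugacy of complements in the coprime (Schur--Zassenhaus/Sylow) situation, which is the same mechanism behind the identity $|G|=|X(G)||Y(G)|$ and the related facts from \cite{BW} and \cite{Gor} that the paper invokes later in the proof of Theorem~\ref{thm-main-formula}; as a byproduct, orbit--stabilizer for the conjugation action re-derives $|X_d|=|\Phi(F_d)|/|Y_d|$, which is why your approach delivers the fiber count at the same time.
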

\begin{proof}
See~\cite[Lemma 2.5]{BBH}. 
\end{proof}

\begin{remark}\label{remark_phi_map}
As explained in~\cite[Remark 2.6]{BBH}, one consequence of this lemma is that $X = X'$ where
\[ X' = \{ t^{-1} \sigma(t) \mid t \in \Phi(F) \}. \]
It follows that the map $\phi: \Phi(F)^{g+1} \rightarrow X^{g+1}$ defined by $t \mapsto t^{-1} \sigma(t)$ in each component is surjective.
\end{remark}

\begin{lemma}\label{lemma_specialpres}
If $H$ is a  \Spsag of $p$-class $c$ then it can be presented with a tuple
of relations in $X_{c}^{g+1} \subseteq \Phi(F_{c})^{g+1}$. Conversely, the group $H$ presented by any tuple of relations in $X_{c}^{g+1}$  is a
\Spsag of $p$-class at most $c$.
\end{lemma}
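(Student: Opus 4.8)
The plan is to treat the two implications separately, using as the common engine the identification of the mod-$p$ relation module of a presentation with the dual of $H^2$, together with the surjectivity statements of Lemma~\ref{phi-map1} and Remark~\ref{remark_phi_map}. Throughout, the crucial bridge is the following: for a minimal presentation $1 \to N \to F_c \to H \to 1$, the cohomology group $H^2(H,\F_p)$ is $\Delta$-equivariantly dual to the relation module $\overline{N} = N/[F_c,N]N^p$, so that condition~3) in the definition of a \Spsg---that the GI-automorphism $\sigma$ act as inversion on $H^2(H,\F_p)$---is equivalent to $\sigma$ acting as $-1$ on $\overline{N}$. Since $p$ is odd and $\sigma$ has order $2$, this eigenspace dictionary is exactly what lets us force the relations into $X_c$. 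The argument is the one given in~\cite{BBH} for Schur $\sigma$-groups; the extra $(g{+}1)$-st relation changes nothing.

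For the forward implication I would start from a \Spsag $H = G_c$; it inherits a GI-automorphism $\sigma$ from $G$ because the $P_c$ are characteristic, and it satisfies $d(H)=g$, $r(H)\in\{g,g+1\}$, with $\sigma$ inverting $H^2(H,\F_p)$. First I would choose a $\sigma$-equivariant surjection $F_c \to H$, which is possible because $\sigma$ inverts $H/\Phi(H)$, so generators of $H$ may be taken in $X(H,\sigma)$; this makes $N$ a $\sigma$-stable subgroup of $\Phi(F_c)$ whose relation module $\overline N$ carries the $\sigma=-1$ action by the bridge above. I would then lift a basis $r_1,\dots,r_m$ (with $m=r(H)\le g+1$) of $\overline N$ and replace each $r_i$ by $s_i = r_i^{-1}\sigma(r_i)$. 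Each $s_i$ lies in $X_c\cap N$ and reduces to $-2\,r_i$ in $\overline N$; since $-2$ is a unit mod $p$, the $s_i$ still span $\overline N$, hence normally generate $N$ by the pro-$p$ Nakayama lemma. Padding with the trivial relation (which lies in $X_c$) produces a presenting tuple in $X_c^{g+1}$.

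For the converse, let $H = F_c/\langle v\rangle$ with $v\in X_c^{g+1}$. That $H$ has $p$-class at most $c$ is immediate, since $H$ is a quotient of $F_c$ and $P_c(F_c)=1$. Using Remark~\ref{remark_phi_map} I would lift $v$ to a tuple $w\in X^{g+1}\subseteq\Phi(F)^{g+1}$ and set $G = F/\langle w\rangle$; then $\sigma$ descends to $G$, because the relations, lying in $X$, are sent to their inverses and so their normal closure is $\sigma$-stable, and, because the relation module of $G$ is generated by elements of $X$, $\sigma$ acts as $-1$ on it and hence inverts $H^2(G,\F_p)$. Moreover $d(G)=g$, there are at most $g+1$ relations, and $G_c \cong H$ since $w$ agrees with $v$ modulo $P_c(F)$. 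The one remaining \Spsg axiom---finiteness of $G^{\ab}$---is the genuine obstacle, and I expect it to be the hardest point: a careless lift can leave $G^{\ab}$ infinite, as the trivial tuple already shows ($G=F$). I would resolve this by exploiting the freedom in the lift: since the map $\phi$ of Remark~\ref{remark_phi_map} realizes every tuple of $X$-relations as $(t_i^{-1}\sigma(t_i))_i$, the abelianized relations may be prescribed to be any tuple in $(p\Z_p^g)^{g+1}$, subject only to their residues mod $p^c$, so they can be chosen to span a finite-index subgroup of $\Z_p^g\cong F^{\ab}$ without disturbing $G_c\cong H$. This forces $G^{\ab}$ finite, whence automatically $r(G)\ge g$ and therefore $r(G)\in\{g,g+1\}$; thus $G$ is a \Spsg and $H=G_c$ is a \Spsag.
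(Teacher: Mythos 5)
Your converse direction is sound, and it is essentially the argument the paper relies on (the paper simply cites \cite[Lemma~2.7]{BBH}, noting that the passage from $g$ to $g+1$ relations changes nothing): lift the tuple from $X_c^{g+1}$ to $X^{g+1}$, note that the normal closure of the lifted relations is $\sigma$-invariant with $\sigma$ acting as $-1$ on their span in the relation module over $F$, and use the freedom in the lift to make $G^{\ab}$ finite. Your resolution of that last point is correct and is the real content: the abelianization of $t \mapsto t^{-1}\sigma(t)$ is multiplication by $-2$ on $\Phi(F^{\ab}) \cong p\Z_p^g$, so among lifts of a fixed $v$ the abelianized relations sweep out a full coset modulo $p^c\Z_p^g$ and can therefore be chosen of full rank, which forces $G^{\ab}$ finite and hence $r(G) \in \{g,g+1\}$.

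The forward direction, however, has a genuine gap. You attribute to the \Spsag $H$ the properties $r(H) \in \{g,g+1\}$ and ``$\sigma$ inverts $H^2(H,\F_p)$''. Neither is part of the definition --- $H$ is only required to be $G_c$ for some \Spsg $G$ --- and both are false in general: in Example~\ref{ex-p3g2c2} and Section~2.5 of the paper, the \Spsags $G_1 = \mathrm{SmallGroup}(27,3)$ and $G_3 \cong F_2 = \mathrm{SmallGroup}(243,2)$ (here $g = 2$, so $g+1 = 3$) have $r(G_1) = 4$ and $r(G_3) = 5$, and $\sigma$ does not invert $H^2(F_2,\F_p)$: it fixes the class of $[x_1,x_2]^p$ in the $p$-multiplicator of $F_2$, since $[x_1^{-1},x_2^{-1}] \equiv [x_1,x_2]$ modulo $\gamma_3(F)$. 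Your ``bridge'' is also misstated: $H^2(H,\F_p)$ is dual to the relation module of a minimal presentation over the \emph{free} group $F$, not over $F_c$; for $1 \to N \to F_c \to H \to 1$ the dual of $\overline{N} = N/[F_c,N]N^p$ is only the kernel of inflation $H^2(H,\F_p) \to H^2(F_c,\F_p)$, whose dimension is $h(H)$ rather than $r(H)$ (for $G_1$ the paper records $h = 2 \neq 4 = r$; the two differ by the nucleus). Consequently the crux of your argument --- that $\sigma$ acts as $-1$ on $\overline{N}$, which is \emph{equivalent} to $N$ being normally generated by elements of $X_c$ --- is left unsupported; it is precisely the statement to be proved, and it cannot be extracted from $H$ alone. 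It must come from the \Spsg $G$ upstairs: by the Koch--Venkov argument (\cite{KV}, as in \cite{BBH}), one runs your replacement trick $\rho \mapsto \rho^{-1}\sigma(\rho)$ on $G$ over $F$, where the needed hypotheses genuinely hold ($d(G) = g$, $r(G) \leq g+1$, $H^2(G,\F_p)$ \emph{is} dual to $R/[F,R]R^p$, and $\sigma$ inverts it by definition of a \Spsg\!\!). This presents $G$ by at most $g+1$ relations lying in $X$; padding with trivial relations and reducing modulo $P_c(F)$ then yields the required tuple in $X_c^{g+1}$ presenting $H = G_c$.
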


\begin{proof}
See~\cite[Lemma 2.7]{BBH}. 
\end{proof}

\begin{definition}
Let $G$ be a \Spsag of $p$-class $c$ and generator rank $g$. For  $c' \geq c$, let $T_{c'} = T_{c'}(G)$ denote the set of all tuples in $X_{c'}^{g+1}$ which present $G$.  We then define the {\em $p$-class $c'$-measure of $G$} by
\[ \Meas_{c'}(G) = \frac{|T_{c'}|}{|X_{c'}|^{g+1}}.  \]
\end{definition}

\begin{remark}\label{remark-notation}
We have chosen to adopt the same notation as in~\cite{BBH}. This will not cause confusion unless one wants to discuss the values of both types of measure side by side. In this situation, one might use the notation $\Meas_{c'}^{+1}(G)$  to indicate that one is working with $(g+1)$-tuples of relations rather than $g$-tuples.
\end{remark}

 \begin{example}\label{ex-p3g2c2}
Let $p = 3$ and $g = c = 2$. In this case, $F_2 = F/P_2(F)$ has order $3^5$ and  the set $X_2$ happens to be a central elementary abelian subgroup of order $9$. In~\cite[Example~2.9]{BBH} we saw that there were exactly three Schur $\sigma$-ancestor groups of $3$-class $2$ and we computed their measures, as defined there, by explicitly counting tuples in $X_2$. These  groups are also \Spsags and it is not hard to see that no others exist of $3$-class $2$. Indeed, by enumerating $3$-tuples we can see that the list is complete and also compute their measures as \Spsags. 

One observes that $624$ of the $9^3 = 729$ $3$-tuples generate $X_2$ and give rise (after taking the quotient) to a group of order $27$ which we labeled $G_1$ in~\cite{BBH}.
Of the tuples that remain, $104$ generate one of the four subgroups of order $3$ inside $X_2$ and give rise to a group of order $81$ that we labeled $G_2$. 
That leaves $1$ tuple with all components trivial that  gives rise to the group $G_3 \cong F_2$.
It follows that the $p$-class $2$-measures of these $3$-groups as \Spsags are $\Meas_2(G_1) = 624/729$, $\Meas_2(G_2) = 104/729$ and
$\Meas_2(G_3) = 1/729$. 
\end{example}

As in ~\cite{BBH}, Lemma~\ref{lemma_specialpres} implies that
$\Meas_c(G)$ defines a discrete probability measure on the set of isomorphism classes of
maximal $p$-class $c$ quotients of all \Spsgs of generator rank $g$. This set is finite and consists of the \Spsags of $p$-class exactly $c$, together with all
\Spsgs of $p$-class less than $c$. The next theorem shows how these different probability measures are related.

\begin{theorem}\label{thm-meas-relations}
Let $G$ be a \Spsag of $p$-class $c$.
\begin{itemize}
\item[(i)]  We have
\[  \Meas_c(G) = \Meas_{c+1}(G) + \sum_Q \Meas_{c+1}(Q) \]
where the summation is over all immediate descendants $Q$ of $G$ which are \Spsags.
\item[(ii)] $\Meas_{c'}(G) = \Meas_{c+1}(G)$ for all $c' \geq c + 1$.
\end{itemize}
\end{theorem}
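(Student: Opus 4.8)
The plan is to analyze the natural projection $\pi\colon F_{c+1}\to F_c$ (and more generally $\pi\colon F_{c'+1}\to F_{c'}$) at the level of relation tuples and to keep track of which group each tuple presents. The kernel of $\pi$ is $P_c(F_{c+1})$, a central elementary abelian subgroup of $F_{c+1}$ lying in $\Phi(F_{c+1})$, on which $\sigma$ acts; write $K^- = X(P_c(F_{c+1}),\sigma)$ for its $(-1)$-eigenspace.

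First I would establish the combinatorial backbone: the induced map $\pi\colon X_{c+1}\to X_c$ is surjective with every fiber a coset of $K^-$. Surjectivity is immediate from the description $X_d = X_d'$ in Lemma~\ref{phi-map1}: given $w = t^{-1}\sigma(t)\in X_c$, lift $t$ to $\Phi(F_{c+1})$ and apply $s\mapsto s^{-1}\sigma(s)$. The coset structure follows from centrality of the kernel, since $\pi(s)=\pi(s')$ with $s,s'\in X_{c+1}$ forces $s'=sk$ with $\sigma(k)=k^{-1}$. Taking $(g+1)$-fold products, $\pi\colon X_{c+1}^{g+1}\to X_c^{g+1}$ is then surjective with all fibers of size $|K^-|^{g+1}$; in particular $|X_{c+1}|^{g+1} = |X_c|^{g+1}\,|K^-|^{g+1}$.

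Next I would identify the groups presented along a fiber. If $v\in X_{c+1}^{g+1}$, then $\pi(v)$ presents $(F_{c+1}/\langle v\rangle)_c$, so $\pi$ sends tuples presenting $G$ and tuples presenting immediate descendants of $G$ into $T_c(G)$. Conversely, fixing $w\in T_c(G)$ and letting $v$ range over its fiber, Lemma~\ref{lemma_specialpres} tells us each $H=F_{c+1}/\langle v\rangle$ is a \Spsag of $p$-class at most $c+1$ with $H_c\cong G$; a short $p$-class bookkeeping then shows $H\cong G$ if $H$ has $p$-class $c$, while $H$ is an immediate descendant of $G$ if it has $p$-class $c+1$. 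Hence each fiber of size $|K^-|^{g+1}$ partitions into the tuples landing in $T_{c+1}(G)$ and those landing in the various $T_{c+1}(Q)$. Summing over $w\in T_c(G)$ gives $|T_c(G)|\,|K^-|^{g+1} = |T_{c+1}(G)| + \sum_Q |T_{c+1}(Q)|$, and dividing by $|X_{c+1}|^{g+1}$ yields (i).

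For (ii) I would rerun the same fiber argument one level higher, comparing $F_{c'+1}$ with $F_{c'}$ for each $c'\ge c+1$. The combinatorial half is identical, with $K^-$ replaced by $X(P_{c'}(F_{c'+1}),\sigma)$; the only substantive change is the $p$-class bookkeeping. Since now $G$ has $p$-class $c<c'$, any $H$ presented by a tuple in $X_{c'+1}^{g+1}$ with $H_{c'}\cong G$ must itself have $p$-class $c$ — a descendant of strictly larger $p$-class would force $H_{c'}$ to have $p$-class $c'>c$ — so the entire fiber over a tuple presenting $G$ again presents $G$. This gives $|T_{c'+1}(G)| = |T_{c'}(G)|\cdot|X(P_{c'}(F_{c'+1}),\sigma)|^{g+1}$, hence $\Meas_{c'+1}(G)=\Meas_{c'}(G)$, and induction finishes the proof. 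I expect the main obstacle to be precisely this $p$-class bookkeeping: verifying that at level $c$ a fiber splits between $G$ and its genuine immediate descendants, whereas at every higher level only $G$ itself recurs. Everything else reduces to the constant-fiber-size computation already packaged in Lemma~\ref{phi-map1}.
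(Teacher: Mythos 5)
Your proof is correct, and its overall architecture coincides with that of the proof the paper relies on --- the paper itself gives no argument here but cites \cite[Theorem~2.11]{BBH}, remarking that the extra relation component changes nothing. That argument, like yours, pushes tuples down along the projection $X_{c+1}^{g+1}\to X_c^{g+1}$, shows the fibers have constant size, and sorts the tuples in the fiber over each $w\in T_c(G)$ by the group they present, with Lemma~\ref{lemma_specialpres} guaranteeing that at level $c+1$ only $G$ and its immediate-descendant \Spsags can occur, and at levels $c'>c$ only $G$ itself; your $p$-class bookkeeping for both cases is exactly right. Where you genuinely differ is in how the constant-fiber property is obtained. You prove it directly: each fiber of $X_{c+1}\to X_c$ is a coset of $K^-=X(P_c(F_{c+1}),\sigma)$, using centrality of the elementary abelian kernel $P_c(F_{c+1})$ and the computation $\sigma(sk)=s^{-1}k^{-1}=(sk)^{-1}$, with Lemma~\ref{phi-map1} needed only for surjectivity. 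The proof in \cite{BBH} --- as recalled in the paper's appendix --- instead deduces uniformity of the fibers from a commuting square linking this projection to the maps $\phi_{c+1}$, $\phi_c$ of Lemma~\ref{phi-map1} (whose fibers are cosets of $Y_{c+1}^{g+1}$, resp.\ $Y_c^{g+1}$) and the group homomorphism $\Phi(F_{c+1})^{g+1}\to\Phi(F_c)^{g+1}$, all of which have fibers of constant size. Your version is more self-contained and identifies the fibers as cosets of an explicit subgroup; the commuting-square version is the form that iterates to the cube of Figure~\ref{comm-cube} in the appendix, where the same uniformity is needed simultaneously in the abelian and non-abelian directions. Either way one arrives at $|T_c(G)|\,|K^-|^{g+1}=|T_{c+1}(G)|+\sum_Q|T_{c+1}(Q)|$, hence (i), and the same count at higher levels, where the fiber collapses onto $T_{c'+1}(G)$ alone, gives (ii).
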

\begin{proof}
See~\cite[Theorem~2.11]{BBH}. 
\end{proof}

\begin{definition}\label{def-meas}
Let $G$ be a \Spsag  of $p$-class $c$. We define the {\em measure of $G$} (denoted $\Meas(G)$) to be the constant value of 
$\Meas_{c'}(G)$ for $c' \geq c+1$.
\end{definition}

\begin{remark}\label{remark-meas-zero}
As for Schur $\sigma$-groups, a finite $p$-group $G$ is a \Spsg if and only if $\Meas(G) > 0$. 
As opposed to the situation for Schur $\sigma$-groups, it is possible for a group $G$ to be both a \Spsg as well as a proper quotient of a larger \Spsg. This means that if $G$ has $p$-class $c$ then it is possible for both $\Meas(G) = \Meas_{c+1}(G)$ and the summation appearing on the right in Theorem~\ref{thm-meas-relations}(i) to be nonzero. 
\end{remark}


\subsection{Measures of abelian $p$-groups} 
\label{subsection-ab-measure}

We now define measures on certain collections of finite abelian $p$-groups. This will allow us to demonstrate that the heuristics introduced in Section~\ref{section-conjectures} are consistent with the Cohen-Lenstra heuristics for $p$-class groups of real quadratic fields.

Every abelian pro-$p$ group $G$ comes equipped with a unique GI-automorphism, namely the inversion mapping $x \mapsto x^{-1}$.  Consider the abelianizations $F^{\ab}$ and $F_c^{\ab}$. We define sets $X^{\ab}$ and $X_c^{\ab}$ in an analogous way to $X$ and $X_c$  but things are now simpler and it is easy to verify that
$X^{\ab} = \Phi(F^{\ab})$ and $X_c^{\ab} = \Phi(F_c^{\ab})$. 

Let $G$ be a finite abelian $p$-group of $p$-class $c$ with generator rank $g$ and let $c' \geq c$.  We will say that the tuple of elements 
$v = (t_1,\ldots,t_{g+1}) \in\Phi(F_{c'}^{\ab})^{g+1}$ {\em presents $G$} if $F_{c'}^{\ab} / \langle v \rangle \cong G$ where $ \langle v \rangle$ 
denotes the (normal) subgroup of $F_{c'}^{\ab}$ generated by $t_1,\ldots,t_{g+1}$. Such tuples must exist since $G$ is finite. We let 
$S_{c'}^{\ab} = S_{c'}^{\ab}(G)$ denote the set of all such tuples in $\Phi(F_{c'}^{\ab})^{g+1}$.   In the non-abelian setting, we introduced a second set of tuples $T_{c'} \subseteq S_{c'}$. We can do the same in the abelian setting, but the situation now is simpler and we have $T_{c'}^{\ab}$ = $S_{c'}^{\ab}$ since $X_{c'}^{\ab} = \Phi(F_{c'}^{\ab})$.

\begin{definition}
Let $G$ be an abelian $p$-group of $p$-class $c$ and generator rank $g$. For $c' \geq c$, we define the {\em abelian$+1$ $c'$-measure of $G$} by
\[ \Meas_{c'}^{\ab}(G) = \frac{|T_{c'}^{\ab}|}{|X_{c'}^{\ab}|^{g+1}}  \left(=  \frac{|S_{c'}^{\ab}|}{|\Phi(F_{c'}^{\ab})|^{g+1}}\right).  \]
Remark~\ref{remark-notation} also applies to our choice of notation in the abelian setting.
\end{definition}

The remaining results in this section allow us to define the quantity $\Meas^{\ab}(G)$ for a finite abelian $p$-group $G$,  to relate the abelian measures to the non-abelian measures introduced in the previous section and to give explicit formulas for these measures. 

\begin{theorem}\label{thm-meas-relations-ab}
Let $G$ be an abelian $p$-group of $p$-class $c$.
\begin{itemize}
\item[(i)]  We have
\[  \Meas_c^{\ab}(G) = \Meas_{c+1}^{\ab}(G) + \sum_Q \Meas_{c+1}^{\ab}(Q) \]
where the summation is over all immediate abelian descendants $Q$ of $G$.
\item[(ii)] $\Meas_{c'}^{\ab}(G) = \Meas_{c+1}^{\ab}(G)$ for all $c' \geq c + 1$.
\end{itemize}
\end{theorem}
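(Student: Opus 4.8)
The plan is to follow the template of the non-abelian argument (\cite[Theorem~2.11]{BBH}), which in the abelian setting becomes considerably cleaner because the GI-constraint is vacuous: every abelian group carries inversion as its unique GI-automorphism, so $T_{c'}^{\ab} = S_{c'}^{\ab}$ and we may work directly with the presentation sets $S_{c'}^{\ab}$. The single organizing tool is the reduction homomorphism $\pi \colon F_{c+1}^{\ab} \to F_c^{\ab}$ coming from $F_{c+1}^{\ab} / P_c(F_{c+1}^{\ab}) \cong F_c^{\ab}$, together with the induced map $\pi^{(g+1)}$ on $(g+1)$-tuples.

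First I would record two elementary facts about $\pi$. Identifying $F_{c+1}^{\ab}$ with $(\Z/p^{c+1})^g$ and $F_c^{\ab}$ with $(\Z/p^c)^g$, the Frattini subgroups are the ``multiply-by-$p$'' submodules, and $\pi$ restricts to a surjection $\Phi(F_{c+1}^{\ab}) \to \Phi(F_c^{\ab})$ whose kernel is elementary abelian of order $p^g$; hence every fiber of the induced map $\pi^{(g+1)} \colon \Phi(F_{c+1}^{\ab})^{g+1} \to \Phi(F_c^{\ab})^{g+1}$ has size $(p^g)^{g+1}$. Second, if $v \in \Phi(F_{c+1}^{\ab})^{g+1}$ presents $H = F_{c+1}^{\ab}/\langle v\rangle$, then a diagram chase gives $F_c^{\ab}/\langle \pi^{(g+1)}(v)\rangle \cong H/P_c(H) = H_c$; that is, $\pi^{(g+1)}(v)$ presents $H_c$. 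Consequently the preimage under $\pi^{(g+1)}$ of $S_c^{\ab}(G)$ is precisely the set of tuples over $F_{c+1}^{\ab}$ presenting some group $H$ with $H_c \cong G$, and it has cardinality $|S_c^{\ab}(G)| \cdot (p^g)^{g+1}$.

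For part (i) I would classify these $H$. Since $H$ is a quotient of $F_{c+1}^{\ab}$, its $p$-class is at most $c+1$. If its $p$-class is at most $c$ then $H_c = H$, forcing $H \cong G$; if its $p$-class is exactly $c+1$ then $H$ is, by definition, an immediate abelian descendant $Q$ of $G$. Thus the preimage partitions into the tuples presenting $G$ and, for each $Q$, the tuples presenting $Q$, giving
\[ |S_c^{\ab}(G)| \cdot (p^g)^{g+1} = |S_{c+1}^{\ab}(G)| + \sum_Q |S_{c+1}^{\ab}(Q)|. \]
Dividing through by $|\Phi(F_{c+1}^{\ab})|^{g+1} = (p^g)^{g+1}\,|\Phi(F_c^{\ab})|^{g+1}$ and recognizing each quotient as the appropriate measure yields (i).

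For part (ii) I would run the identical count with $c$ replaced by an arbitrary $c' \geq c+1$, the only new input being that now no group $H$ of $p$-class $c'+1$ can satisfy $H_{c'} \cong G$. This is where the one genuine structural point enters, and it is elementary in the abelian case: since $H$ is a quotient of the abelian group $F_{c'+1}^{\ab}$ it is a finite abelian $p$-group with $P_n(H) = p^n H$, so $H_{c'} \cong G$ with $G$ of $p$-class $c < c'$ forces $p^c H = p^{c'} H$, whence $p^c H = 0$ by Nakayama and $H$ has $p$-class at most $c < c'+1$. Hence the preimage of $S_{c'}^{\ab}(G)$ consists only of tuples presenting $G$ itself, the sum over descendants disappears, and we obtain $\Meas_{c'}^{\ab}(G) = \Meas_{c'+1}^{\ab}(G)$; iterating gives (ii). I expect the main obstacle to be purely bookkeeping --- verifying the uniform fiber size of $\pi^{(g+1)}$ and the compatibility that $\pi^{(g+1)}(v)$ presents $H_c$ --- together with the strict-descent fact invoked in (ii); none of these is deep, but they are exactly the places where an error would corrupt the counting.
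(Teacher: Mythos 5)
Your proposal is correct and is essentially the paper's intended argument: the paper proves this theorem by appeal to the method of Theorem~\ref{thm-meas-relations} (that is, \cite[Theorem~2.11]{BBH}), namely counting tuples along the reduction $\Phi(F_{c'+1}^{\ab})^{g+1}\to\Phi(F_{c'}^{\ab})^{g+1}$ using the constant fiber size and classifying the resulting quotients as $G$ itself or an immediate descendant, exactly as you do. Your abelian-specific simplifications (the identity $T^{\ab}_{c'}=S^{\ab}_{c'}$, the explicit fiber size $p^{g(g+1)}$, and the Nakayama step ruling out immediate descendants in part (ii)) are the expected specializations and are all verified correctly.
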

\begin{proof}
Proved in a similar fashion to Theorem~\ref{thm-meas-relations}.
\end{proof}

\begin{definition} \label{def-meas-ab}
Let $G$ be an abelian $p$-group of $p$-class $c$. We define the {\em abelian$+1$ measure of $G$} (denoted $\Meas^{\ab}(G)$) to be the constant value of $\Meas_{c'}^{\ab}(G)$ for $c' \geq c+1$.
\end{definition}
\begin{remark}
It follows from part (i) of Theorem~\ref{thm-meas-relations-ab} that if $G$ is an abelian $p$-group of $p$-class $c$ then
\[ \Meas^{\ab}(G) = \Meas_c^{\ab}(G) - \sum_Q \Meas_{c+1}^{\ab}(Q) \]
where the summation is over all abelian groups $Q$ of $p$-class $c+1$ with $Q/Q^{p^c} \cong G$;  here $Q^{p^c}$ is the subgroup of $Q$ generated by all $p^c$-th powers.
\end{remark}

\begin{theorem}\label{thm-meas-vs-meas-ab}
Let $G$ be an abelian $p$-group of $p$-class $c$. For all $c' \geq c$ we have
\[ \Meas_{c'}^{\ab}(G) = \sum_Q \Meas_{c'}(Q) \]
where the summation is over all \Spsags $Q$ with $p$-class at most $c'$ and $Q^{\ab} \cong G$.
\end{theorem}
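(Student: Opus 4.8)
The plan is to reduce both sides to counts of tuples and then exhibit a ``measure-preserving'' projection between the two tuple spaces. Writing out the definitions, the right-hand side equals
\[ \sum_Q \Meas_{c'}(Q) = \frac{1}{|X_{c'}|^{g+1}} \sum_Q |T_{c'}(Q)|, \]
so I first want to understand which tuples $v \in X_{c'}^{g+1}$ contribute. By Lemma~\ref{lemma_specialpres}, every $v \in X_{c'}^{g+1}$ presents a \Spsag $Q = F_{c'}/\langle v \rangle$ of $p$-class at most $c'$, so no tuple is excluded on those grounds; the only constraint is $Q^{\ab} \cong G$. Let $\Psi \colon X_{c'}^{g+1} \to \Phi(F_{c'}^{\ab})^{g+1}$ be the componentwise image under the projection $\pi \colon F_{c'} \to F_{c'}^{\ab}$. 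Since $F_{c'}^{\ab}$ is abelian, the image of the normal closure $\langle v \rangle$ is the subgroup generated by the images of the components, whence $(F_{c'}/\langle v\rangle)^{\ab} \cong F_{c'}^{\ab}/\langle \Psi(v) \rangle$. Thus $Q^{\ab} \cong G$ if and only if $\Psi(v) \in S_{c'}^{\ab}(G)$, and therefore $\sum_Q |T_{c'}(Q)| = |\Psi^{-1}(S_{c'}^{\ab}(G))|$, the sum ranging over exactly the \Spsags described in the statement.

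Given this, it suffices to prove that $\Psi$ is measure-preserving, i.e. that for every subset $W \subseteq \Phi(F_{c'}^{\ab})^{g+1}$ one has $|\Psi^{-1}(W)|/|X_{c'}|^{g+1} = |W|/|\Phi(F_{c'}^{\ab})|^{g+1}$; applying this with $W = S_{c'}^{\ab}(G)$ then yields $\Meas_{c'}^{\ab}(G)$ on the nose. Equivalently, I must show that every fiber of $\Psi$ has the same cardinality $|X_{c'}|^{g+1}/|\Phi(F_{c'}^{\ab})|^{g+1}$. The main obstacle here is that $X_{c'}$ need not be a subgroup of $F_{c'}$ (the product of two $\sigma$-anti-invariant elements is anti-invariant only when they commute), so $\Psi$ is not visibly a homomorphism and its fibers cannot be read off as cosets directly.

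To get around this I route through the surjection $\phi_{c'} \colon \Phi(F_{c'})^{g+1} \to X_{c'}^{g+1}$ of Lemma~\ref{phi-map1}, whose fibers are cosets of $Y_{c'}^{g+1}$ and hence all have size $|Y_{c'}|^{g+1}$, giving $|X_{c'}|^{g+1} = |\Phi(F_{c'})|^{g+1}/|Y_{c'}|^{g+1}$. Composing, for $t = (t_1,\dots,t_{g+1}) \in \Phi(F_{c'})^{g+1}$ one computes $\Psi(\phi_{c'}(t)) = (\pi(t_i^{-1}\sigma(t_i)))_i = (\pi(t_i)^{-2})_i$, using that $\sigma$ acts by inversion on $F_{c'}^{\ab}$. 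Since $\pi$ is a surjective homomorphism onto $\Phi(F_{c'}^{\ab})$ and, as $p$ is odd, the squaring map is a bijection of the abelian $p$-group $\Phi(F_{c'}^{\ab})$, the composite $\Psi \circ \phi_{c'}$ is a surjection with all fibers of equal size $(|\Phi(F_{c'})|/|\Phi(F_{c'}^{\ab})|)^{g+1}$. Counting $|(\Psi\circ\phi_{c'})^{-1}(W)|$ in two ways---once through this uniform-fiber description and once as $|\Psi^{-1}(W)|\cdot|Y_{c'}|^{g+1}$---and substituting the relation for $|X_{c'}|^{g+1}$ gives exactly $|\Psi^{-1}(W)| = |W|\cdot|X_{c'}|^{g+1}/|\Phi(F_{c'}^{\ab})|^{g+1}$, completing the argument. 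The heart of the proof is thus the two-step factorization of $\Psi\circ\phi_{c'}$ into a surjective homomorphism followed by the squaring map, bijective precisely because $p$ is odd; this is what lets me finesse the failure of $X_{c'}$ to be a subgroup.
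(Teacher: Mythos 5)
Your proof is correct and follows essentially the same route as the paper's: the paper defers to \cite[Theorem~2.18]{BBH}, whose argument (echoed in this paper's appendix around Figures~\ref{comm-sq1} and~\ref{comm-cube}) is precisely that the square formed by the maps $\phi_{c'}$, the reduction homomorphism $\Phi(F_{c'})^{g+1} \to \Phi(F_{c'}^{\ab})^{g+1}$, and the abelian map $t \mapsto t^{-2}$ commutes with all maps having fibers of constant size, forcing the reduction $X_{c'}^{g+1} \to \Phi(F_{c'}^{\ab})^{g+1}$ (your $\Psi$) to be measure-preserving. Your factorization of $\Psi \circ \phi_{c'}$ into the reduction homomorphism followed by $x \mapsto x^{-2}$ (bijective exactly because $p$ is odd), combined with Lemma~\ref{phi-map1} and Lemma~\ref{lemma_specialpres}, is the same mechanism, so the two arguments coincide in substance.
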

\begin{proof}
See~\cite[Theorem~2.18 ]{BBH}.
\end{proof}

The next result is very similar to Theorem~2.20 in \cite{BBH}. However, observe that an extra factor $|G|$ now appears in the denominators and there has been a slight change to the indexing on one of the products. 
\begin{theorem}\label{main-group-theory-ab}
Let $G$ be an abelian $p$-group of $p$-class $c$ and generator rank $g$. We have
\begin{equation*}
\Meas_c^{\ab}(G)  
= \frac{1}{|\Aut(G)||G|} \,
p^{g(g+1)}
\prod_{k=1}^g (1 - p^{-k}) \prod_{k={g + 2 - u}}^{g+1} (1 - p^{-k})
\end{equation*}
where $u$ counts the number of cyclic groups of order strictly less than $p^c$ in the direct product decomposition of $G$. \\
For $c' > c$, we have
\begin{eqnarray*}
\Meas_{c'}^{\ab}(G) = \Meas^{\ab}(G) = 
\frac{1}{|\Aut(G)||G|}
\,
p^{g(g+1)}
\prod_{k=1}^g (1 - p^{-k}) \prod_{k=2}^{g+1} (1 - p^{-k}).
\end{eqnarray*}
\end{theorem}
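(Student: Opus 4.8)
The plan is to compute $|S_{c'}^{\ab}(G)|$ directly by sorting tuples according to the subgroup they generate, and then divide by $|\Phi(F_{c'}^{\ab})|^{g+1}$. Write $A = F_{c'}^{\ab}$. Since the lower $p$-central series of an abelian group coincides with its $p$-power series, one checks that $A \cong (\Z/p^{c'})^g$, so $\Phi(A) = pA$ and $|\Phi(A)| = p^{(c'-1)g}$, giving $|\Phi(F_{c'}^{\ab})|^{g+1} = p^{(c'-1)g(g+1)}$. A tuple $v \in \Phi(A)^{g+1}$ lies in $S_{c'}^{\ab}(G) = T_{c'}^{\ab}(G)$ exactly when the subgroup $N = \langle v \rangle$ satisfies $A/N \cong G$. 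Because $A$ and $G$ both have generator rank $g$, any surjection $A \twoheadrightarrow G$ induces an isomorphism on Frattini quotients, forcing $N \subseteq \Phi(A)$; conversely every $N$ with $A/N \cong G$ satisfies $N \subseteq \Phi(A)$ automatically, and a generating $(g+1)$-tuple of such an $N$ then automatically lies in $\Phi(A)^{g+1}$. Hence, writing $\nu(M)$ for the number of generating $(g+1)$-tuples of a finite abelian $p$-group $M$,
\[ |S_{c'}^{\ab}(G)| = \frac{|\Epi(A,G)|}{|\Aut(G)|}\cdot \nu(N), \]
where $|\Epi(A,G)|/|\Aut(G)|$ counts the subgroups $N$ with $A/N \cong G$ (each kernel arising from exactly $|\Aut(G)|$ surjections).

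I would then evaluate the two factors. For the first, since $G$ has exponent dividing $p^{c'}$, every homomorphism $\Z_p^g \to G$ kills $p^{c'}\Z_p^g$, so $\Epi(A,G) = \Epi(\Z_p^g,G)$ and the standard count gives $|\Epi(\Z_p^g,G)| = |G|^g\prod_{k=1}^g(1 - p^{-k})$, producing the first product. For the factor $\nu(N)$ I first pin down the isomorphism type of $N$: lifting a surjection $A \twoheadrightarrow G$ to $\Z_p^g \twoheadrightarrow G$ and applying Smith normal form, with $G \cong \prod_{i=1}^g \Z/p^{e_i}$ and $\max_i e_i = c$, shows that \emph{every} kernel satisfies $N \cong \prod_{i=1}^g \Z/p^{c'-e_i}$; in particular $|N| = p^{gc'}/|G|$ and $\rk(N) = \#\{i : e_i < c'\}$. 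I would then apply the general formula $\nu(M) = |M|^{g+1}\prod_{i=0}^{\rk(M)-1}(1 - p^{\,i-g-1})$, which follows from the fact that a tuple generates a finite $p$-group iff its image generates $M/\Phi(M) \cong \F_p^{\rk(M)}$, so that $\nu$ depends only on $|M|$ and $\rk(M)$. Since all kernels are isomorphic, $\nu(N)$ is constant and factors out of the sum as above.

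It remains to assemble and reindex. When $c' = c$ we have $e_i < c$ for exactly $u$ indices, so $\rk(N) = u$ and the product $\prod_{i=0}^{u-1}(1 - p^{\,i-g-1})$ reindexes, via $k = g+1-i$, to $\prod_{k=g+2-u}^{g+1}(1 - p^{-k})$; when $c' > c$ we have $e_i < c'$ for all $i$, so $\rk(N) = g$ and the same product becomes $\prod_{k=2}^{g+1}(1 - p^{-k})$. Substituting $|N|^{g+1} = p^{gc'(g+1)}/|G|^{g+1}$ and $|\Phi(F_{c'}^{\ab})|^{g+1} = p^{(c'-1)g(g+1)}$ into $\Meas_{c'}^{\ab}(G) = |S_{c'}^{\ab}(G)|/|\Phi(F_{c'}^{\ab})|^{g+1}$, the powers of $p$ collapse to $p^{\,gc'(g+1)-(c'-1)g(g+1)} = p^{g(g+1)}$ independently of $c'$, while the $|G|^g/|G|^{g+1}$ contributes the factor $1/|G|$. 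This yields both displayed formulas simultaneously and makes transparent why the value is independent of $c'$ once $c' > c$, matching Definition~\ref{def-meas-ab}.

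The main obstacle is the step establishing the isomorphism type of $N$ uniformly over all surjections $A \twoheadrightarrow G$: this is precisely where the homocyclic structure of $A = (\Z/p^{c'})^g$ is essential, and the lifting/Smith-normal-form argument must be carried out with care so as to guarantee that $\nu(N)$ is the same for every kernel in the sum and hence genuinely factors out. Once $|N|$ and $\rk(N)$ are known, the distinction between $\rk(N) = u$ (for $c' = c$) and $\rk(N) = g$ (for $c' > c$) is exactly what separates the two products, and the remaining manipulations are the routine reindexings indicated above.
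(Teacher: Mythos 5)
Your proposal is correct and follows essentially the same route as the paper's proof: both enumerate the kernels $N$ with $F_{c'}^{\ab}/N \cong G$ via $|\Epi(F_{c'}^{\ab},G)|/|\Aut(G)|$, count the generating $(g+1)$-tuples of each kernel through its Frattini quotient (rank $u$ when $c'=c$, rank $g$ when $c'>c$), and then simplify the resulting ratio. Your Smith-normal-form identification of the kernel's isomorphism type, and your uniform treatment of the two cases, simply make explicit what the paper leaves implicit when it says the case $c'>c$ follows by replacing $u$ with $g$.
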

\begin{proof}
As in the proof of~\cite[Theorem 2.20]{BBH}, the number of normal subgroups $\Rbar$ such that  $F_c^{\ab}/\Rbar \cong G$ 
is
\[ \frac{|\mathrm{Epi}(F,G)|}{|\Aut(G)|} =\frac{ |\Phi(G)|^g}{|\Aut(G)|} \prod_{k=1}^g (p^g - p^{g-k}). \]
The change occurs in the next step. The number of $(g+1)$-tuples that generate each subgroup $\Rbar$ is
\[   |\Phi(\Rbar)|^{g+1} \prod_{k=1}^u (p^{g+1} - p^{u-k}). \]
Combining the statements above, we have
\begin{eqnarray*}
\mathrm{Meas}_c^{\ab}(G) = \frac{|S_c^{\ab}(G)|}{|\Phi(F_c^{\ab})|^{g+1}} 
&=& \frac{1}{|\Phi(F_c^{\ab})|^{g+1}}  \frac{ |\Phi(G)|^g}{|\Aut(G)|} \prod_{k=1}^g (p^g - p^{g-k}) |\Phi(\Rbar)|^{g+1} \prod_{k=1}^u (p^{g+1} - p^{u-k}) \\
&=& \frac{1}{|\Phi(F_c^{\ab})|^{g+1}}  \frac{ (|\Phi(F_c^{\ab})|/|\Rbar|)^g}{|\Aut(G)|} \prod_{k=1}^g (p^g - p^{g-k}) \frac{|\Rbar|^{g+1}}{p^{(g+1)u}} \prod_{k=1}^u (p^{g+1} - p^{u-k}) \\
&=& \frac{|\Rbar|}{|\Phi(F_c^{\ab})|}  \frac{ 1}{|\Aut(G)|} \prod_{k=1}^g (p^g - p^{g-k}) \frac{1}{p^{(g+1)u}} \prod_{k=1}^u (p^{g+1} - p^{u-k}) \\
&=& \frac{p^g}{|G|}  \frac{ 1}{|\Aut(G)|} \prod_{k=1}^g (p^g - p^{g-k}) \frac{1}{p^{(g+1)u}} \prod_{k=1}^u (p^{g+1} - p^{u-k}) \\
&=& \frac{1}{|\Aut(G)||G|} \,
p^{g(g+1)}
\prod_{k=1}^g (1 - p^{-k}) \prod_{k={g + 2 - u}}^{g+1} (1 - p^{-k}).
\end{eqnarray*}
The derivation of the formula for $\Meas_{c'}^{\ab}(G)$ for $c' > c$ involves replacing $u$ with $g$ as discussed at the end of the proof of~\cite[Theorem~2.20]{BBH}.
\end{proof}

\begin{remark}\label{remark-eta}
If we define $\eta_j(p) = \prod_{k=1}^j (1 - p^{-k})$ as in~\cite{CL2}, then the formulas in Theorem~\ref{main-group-theory-ab} can be written
\begin{eqnarray*}
\mathrm{Meas}_c^{\ab}(G) &=& 
\frac{1}{|\Aut(G)| |G|} \,
p^{g(g+1)} 
\left(\frac{ \eta_g(p) \eta_{g+1}(p)}{\eta_{g + 1 - u}(p) }\right) \\
\mathrm{Meas}^{\ab}(G) &=& \frac{1}{|\Aut(G)||G|}
\,
p^{g(g+1)} 
\left(\frac{ \eta_g(p) \eta_{g+1}(p)}{ 1 - p^{-1}}\right).
\end{eqnarray*}
\end{remark}


\subsection{Formula for $\Meas_c(G)$}
\label{subsection-formulas}

In this section we give explicit formulas for $\Measc(G)$ and $\Meas(G)$ where $G$ is a \Spsag. In the context of \Ssgs, we obtained such formulas \cite[Theorem~2.25]{BBH} under an additional technical hypothesis which we called KIP. Subsequently, it was shown by Boston and Wood in \cite{BW} how to obtain such formulas without this assumption. We begin by explaining how this is carried out.

In \cite[Theorem~2.25]{BBH}, the group $G$ is assumed to be a \Ssag of $p$-class~$c$ and rank~$g$ satisfying KIP.  To derive a formula for $\Meas_c(G)$, we began by enumerating the normal subgroups $\Rbar$ in $F_c$ with $F_c/\Rbar \cong G$. There are 
\[ \frac{|\Epi(F,G)|}{|\Aut(G)|} =\frac{ |\Phi(G)|^g}{|\Aut(G)|} \prod_{k=1}^g (p^g - p^{g-k}) = \frac{ |G|^g}{|\Aut(G)|} \prod_{k=1}^g (1 - p^{-k})\]
such subgroups where $\Epi(F,G)$ denotes the set of surjective homomorphisms from $F$ to $G$. We then showed that each $\Rbar$ is generated as a normal subgroup of $F_c$ by the {\em same} number of tuples in $\Phi(F_c)^g$. This allowed us to compute $|S_c(G)|$ by summing this number over all such subgroups $\Rbar$ and so lead to a formula for the ratio $|S_c(G)|/ |\Phi(F_c)|^g$. KIP then entered the picture through an application of \cite[Lemma~2.23]{BBH} which allowed us to convert this into a formula for the desired ratio
\[  \Measc(G) = \frac{|T_c(G)|}{|X_c|^g}. \]
The resulting formula for $\Measc(G)$ involved the quantity $|\Aut(G)|$. In  \cite[Corollary~2.30]{BBH}, we gave a formula in terms of $|\Aut_\sigma(G)|$. The second conversion was carried out using \cite[Theorem~2.29]{BBH} whose proof involved another application of the KIP assumption. The formula for $\Meas(G)$ was obtained in a similar fashion.

The key observation in  \cite[Section~4]{BW} is that one can directly compute the quantity $|T_c(G)|$ by enumerating over normal subgroups $\Rbar$ in $F_c$ which are $\sigma$-invariant and then counting the number of tuples in $X_c^g$ that generate each such subgroup. One then immediately obtains a formula for $\Measc(G)$ in terms of $|\Aut_\sigma(G)|$. This is the approach taken in the proof below noting that, in the context of \Spsags, our tuples of relations now have $g+1$ components.

To avoid any confusion over notation\footnote{The objects that we denote $X(G)$ and $Y(G)$ in this paper and also~\cite{BBH} are denoted $Y(G)$ and $Z(G)$ respectively in \cite{BW}}, 
we briefly recall some useful results from \cite[Section~4]{BW} involving the sizes of the sets $X(G) = X(G,\sigma)$ and $Y(G) = Y(G, \sigma)$ that will be used several times in what follows. In these results, the groups involved are understood to be finite $p$-groups, each with a specified GI-automorphism~$\sigma$. Given such a group $G$, \cite[Lemma~4.2]{BW} states that $|G| = |X(G)||Y(G)|$. The authors observe that this follows directly from \cite[Theorem~3.5]{Gor}. Given a short exact sequence $1 \rightarrow K \rightarrow G \rightarrow H \rightarrow 1$ of such groups in which the maps are $\sigma$-equivariant, \cite[Lemma~4.3 and 4.4]{BW} show that the induced map $Y(G) \rightarrow Y(H)$ is surjective from which it follows that $|Y(G)| = |Y(K)| |Y(H)|$. Since $|G| = |K| |H|$, one can then see that we also have $|X(G)| = |X(K)| |X(H)|$. The surjectivity of the map is established using the Schur-Zassenhaus Theorem. It also follows from a more elementary argument similar to that used in the proof of part~(iv) of~\cite[Lemma~2.23]{BBH}.

Finally, recall that given a presentation $F/R$ for $G$, we can form the $\mathbb{F}_p$-vector space $R/R^\ast$ where $R^\ast$ is the closure of $R^p[F,R]$ in $F$. This is the {\em $p$-multiplicator of $G$} and its dimension is equal to the relation rank $r(G)$. If $G$ has $p$-class $c$, then the subspace $R^\ast P_c(F)/R^\ast$ is called the {\em nucleus of $G$}. We define $h(G)$ to be the difference between the dimensions of the $p$-multiplicator and the nucleus. Equivalently, it is the dimension of the $\mathbb{F}_p$-vector space $R/R^\ast P_c(F)$. For more discussion of these quantities, see~\cite[Section~2.1]{BBH} and the remarks immediately following \cite[Definition~2.24]{BBH}.

\begin{theorem}\label{thm-main-formula}
Let $G$ be a \Spsag of $p$-class $c$ and rank
$g$ and let $h=h(G)$. Then
we have

\[ 
\Meas_c(G) = 
\frac{y(G)}{|\Aut_\sigma(G)| |G|} \,
p^{g(g+1)}
\left(\frac{ \eta_g(p) \eta_{g+1}(p)}{\eta_{g + 1 - h}(p) }\right).
\]
If $G$ is also a \Spsg with $r = r(G)$ then
\[
\Meas(G) = \frac{y(G)}{|\Aut_\sigma(G)||G|}
\,
p^{g(g+1)} 
\left( \frac{ \eta_g(p) \eta_{g+1}(p)}{\eta_{g + 1 - r}(p) } \right),
\]
otherwise $\Meas(G) = 0$.
\end{theorem}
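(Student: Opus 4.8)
The plan is to follow the Boston--Wood strategy recalled just above the statement: compute $|T_c(G)|$ directly as a two-step count and then divide by $|X_c|^{g+1}$. First I would enumerate the $\sigma$-invariant normal subgroups $\Rbar \trianglelefteq F_c$ with $F_c/\Rbar \cong G$; then, for each such $\Rbar$, I would count the $(g+1)$-tuples in $X_c^{g+1}$ that normally generate it. The structural inputs driving everything are the multiplicativity identities $|G| = |X(G)||Y(G)|$ and $|X(G)| = |X(K)||X(H)|$ for a $\sigma$-equivariant sequence $1 \to K \to G \to H \to 1$, together with the surjectivity (hence uniform fibers) of the induced maps on $X$ and $Y$, all imported from \cite[Section~4]{BW}.

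For the first step, a $\sigma$-invariant $\Rbar$ with quotient $G$ corresponds to a $\sigma$-equivariant epimorphism $F \to G$ modulo the free action of $\Aut_\sigma(G)$, so there are $|\Epi_\sigma(F,G)|/|\Aut_\sigma(G)|$ of them. Since $\sigma$-equivariance forces the images of the generators into $X(G)$, and a tuple is surjective exactly when it spans $G/\Phi(G) \cong \F_p^g$ (on which $\sigma$ acts as $-1$, so that all of $G/\Phi(G)$ lies in its $X$-part), the uniform fibers of $X(G) \twoheadrightarrow X(G/\Phi(G))$ give $|\Epi_\sigma(F,G)| = |X(\Phi(G))|^g \prod_{k=1}^g (p^g - p^{g-k})$. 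For the second step, any normal generating tuple of $\Rbar \subseteq \Phi(F_c)$ has entries in $X(\Rbar) = X_c \cap \Rbar$, and generates normally iff its image spans the $h$-dimensional space $V = \Rbar/[\Rbar,F_c]\Rbar^p \cong R/R^\ast P_c(F)$. The crucial observation is that $\sigma$ acts on $V$ by $-1$: by Lemma~\ref{lemma_specialpres}, $G$ admits relations in $X_c^{g+1}$, and these span $V$ while being $\sigma$-inverted. Hence $X(V)=V$, the map $X(\Rbar) \twoheadrightarrow V$ has uniform fibers of size $|X(\Rbar)|/p^h$, and the tuple count is $\prod_{i=0}^{h-1}(p^{g+1}-p^i)\,(|X(\Rbar)|/p^h)^{g+1}$.

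Multiplying the two counts, dividing by $|X_c|^{g+1} = (|X(\Rbar)|\,|X(\Phi(G))|)^{g+1}$, and using $y(G)=|Y(\Phi(G))|$ and $|X(\Phi(G))| = |G|/(p^g y(G))$ (both from the multiplicativity identities applied to $1 \to \Phi(G) \to G \to G/\Phi(G) \to 1$), one sees the $|X(\Rbar)|^{g+1}$ factors cancel and the powers $p^{(g+1)h}$ from the tuple count cancel against the $p^{-h(g+1)}$ from the fiber size. Converting $\prod_{k=1}^g(p^g - p^{g-k}) = p^{g^2}\eta_g(p)$ and $\prod_{i=0}^{h-1}(p^{g+1}-p^i) = p^{(g+1)h}\,\eta_{g+1}(p)/\eta_{g+1-h}(p)$ then yields the stated formula for $\Meas_c(G)$.

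For $\Meas(G)$ I would use $\Meas(G) = \Meas_{c+1}(G)$ and rerun the count inside $F_{c+1}$; the only change is that $h$ is replaced by $h'=\dim R/R^\ast P_{c+1}(F)$. As $G$ has $p$-class exactly $c$, we have $P_{c+1}(F) = [F,P_c(F)]P_c(F)^p \subseteq R^p[F,R] = R^\ast$, so $h'=\dim R/R^\ast = r$. However, the count is now valid only if $\sigma$ acts as $-1$ on the full $p$-multiplicator $R/R^\ast \cong H^2(G,\F_p)^\vee$, i.e.\ only if $\sigma$ inverts $H^2(G,\F_p)$, which is exactly the condition that $G$ be a \Spsg. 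If it is, the identical algebra gives the formula with $\eta_{g+1-r}(p)$; if not, $X(\Rbar)$ lands in the proper subspace $(R/R^\ast)^{\sigma=-1}$, no $(g+1)$-tuple over $X_c$ can generate $\Rbar$, and $\Meas(G)=0$. The main obstacle is that $X(G)$ is not a subgroup, so the surjectivity and \emph{uniformity} of the fibers of the $X$-maps is not formal; this is precisely what the Boston--Wood multiplicativity lemmas supply, and the second delicate point is pinning down the $\sigma$-action on the relation module $V$, since it is exactly this action that separates the $h$- and $r$-cases and produces the vanishing.
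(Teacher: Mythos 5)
Your proposal follows essentially the same route as the paper's own proof: the Boston--Wood two-step count of $|T_c(G)|$ (first the $\sigma$-invariant normal subgroups via $|\Epi_\sigma(F,G)|/|\Aut_\sigma(G)|$, then the uniform-fiber count of $(g+1)$-tuples over the $h$-dimensional space $\Rbar/\Rbar^\ast$), the same multiplicativity identities $|G|=|X(G)||Y(G)|$ and $|X(G)|=|X(K)||X(H)|$, and the same passage to $\Meas(G)=\Meas_{c+1}(G)$ via $P_{c+1}(F)\subseteq R^\ast$, so that $h$ gets replaced by $r$. Your bookkeeping through $1\to\Rbar\to\Phi(F_c)\to\Phi(G)\to 1$ and the identities $y(G)=|Y(\Phi(G))|$, $|X(\Phi(G))|=|G|/(p^g\,y(G))$ is only a cosmetic repackaging of the paper's use of the sequences $1\to\Rbar\to F_c\to G\to 1$ and $1\to\Phi(F_c)\to F_c\to F_c/\Phi(F_c)\to 1$, and your algebra is correct.

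There is, however, a genuine (if small) gap in your final case analysis. You assert that $\sigma$ acting as $-1$ on the full $p$-multiplicator $R/R^\ast$ ``is exactly the condition that $G$ be a \Spsg,'' and your vanishing argument treats only the failure of that inversion. But the definition of a \Spsg also requires $r(G)\leq g+1$, and for a \Spsag these are separate conditions: the rank bound can fail on its own (for the groups of Example~\ref{ex-p3g2c2} one has $r(G_1)=r(G_2)=4>3$ and $r(G_3)=5>3$). If $G$ is a \Spsag with $\sigma$ inverting $R/R^\ast$ but $r(G)>g+1$, then $(R/R^\ast)^{\sigma=-1}$ is not a proper subspace and your stated reason for $\Meas(G)=0$ gives nothing; you never justify that this case cannot occur. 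The paper handles it explicitly as a second alternative, and the repair inside your framework is one line: $g+1$ elements cannot span an $\F_p$-space of dimension $r>g+1$, so again no tuple in $X_{c+1}^{g+1}$ can generate any invariant kernel. A second, subtler point: your proof that $\sigma$ acts as $-1$ on $V=\Rbar/\Rbar^\ast$ invokes Lemma~\ref{lemma_specialpres}, which produces $\sigma$-inverted relations spanning the relation module of \emph{one particular} kernel $\Rbar_0$, whereas your count runs over \emph{all} $\sigma$-invariant kernels $\Rbar$ with $F_c/\Rbar\cong G$; transferring the inversion from $\Rbar_0$ to an arbitrary $\Rbar$ needs an extra step (e.g.\ a $\sigma$-equivariant automorphism of $F_c$ carrying $\Rbar_0$ to $\Rbar$, which one gets from conjugacy of GI-automorphisms together with the surjectivity of $X(F_c)\to X(G)$). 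The paper asserts this inversion with equal brevity, so this is a shared gloss rather than an error peculiar to your write-up, but as written your argument establishes the claim only for $\Rbar_0$.
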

 
\begin{proof}
Each tuple of elements in $T_c(G) \subseteq X_c^{g+1}$ generates a normal subgroup $\Rbar$ of $F_c$ which is $\sigma$-invariant since $\sigma(X_c) = X_c$. It is straightforward to show that the number of such $\sigma$-invariant normal subgroups is $|\Epi_\sigma(F,G)| / |\Aut_\sigma(G)|$ where $\Epi_\sigma(F,G)$ denotes the set of surjective $\sigma$-equivariant homomorphisms from $F$ to $G$. By \cite[Lemma~4.6]{BW}, we have
\[  \frac{|\Epi_\sigma(F,G)|}{|\Aut_\sigma(G)|} = \frac{|X(G)|^g}{|\Aut_\sigma(G)|} \prod_{k=1}^g (1 - p^{-k}). \]

We now show that each $\Rbar$ is generated as a normal subgroup of $F_c$ by the {\em same} number of tuples in $X_c^{g+1}$.
Observe that a $(g+1)$-tuple of elements generates $\Rbar$ as a normal subgroup of $F_c$ if and only if its image generates the $h$-dimensional $\F_p$-vector space  $\Rbar/\Rbar^\ast$ where $\Rbar^\ast = \Rbar^p [F_c,\Rbar] = P_c(F)R^\ast/P_c(F)$ with $R$ the preimage of $\Rbar$ in $F$.
We also note that since $G = F_c/\Rbar$ is a \Spsag, the induced action of $\sigma$ on  $\Rbar/\Rbar^\ast$ is by inversion.
Applying the same argument as in the proof of~\cite[Lemma~4.5]{BW}, we see that the intersection of $Y(\Rbar) = Y_c \cap \Rbar$ with each fiber of the reduction map $\Rbar \rightarrow \Rbar/\Rbar^\ast$ has constant size $|\Rbar^\ast|/|Y(\Rbar)|$. Since $\Rbar/\Rbar^\ast$ is $h$-dimensional, it follows that the number of tuples in $X_c^{g+1}$ which generate $\Rbar$ is
\[   \left(\frac{|\Rbar^\ast|}{|Y(\Rbar)|}\right)^{g+1} \prod_{k=1}^h (p^{g+1}-p^{h-k})  =  \frac{|\Rbar^\ast|^{g+1}}{|Y(\Rbar)|^{g+1}} p^{h(g+1)} \prod_{k=1}^h (1 -p^{h - g - 1 - k}). \]

Combining the above, we see that 
\begin{eqnarray*}
\Measc(G) = \frac{|T_c(G)|}{|X_c|^{g+1}} 
&=& \frac{1}{|X_c|^{g+1}} \frac{|X(G)|^g}{|\Aut_\sigma(G)|} \prod_{k=1}^g (1 - p^{-k}) \frac{|\Rbar^\ast|^{g+1}}{|Y(\Rbar)|^{g+1}} p^{h(g+1)} \prod_{k=1}^h (1 -p^{h - g - 1 - k}) \\
&=& \frac{1}{|\Aut_\sigma(G)|} \frac{|X(G)|^g |X(\Rbar)|^{g+1}}{|X_c|^{g+1}} \prod_{k=1}^g (1 - p^{-k}) \prod_{k=1}^h (1 -p^{h - g - 1 - k})
\end{eqnarray*}
where the simplification in the second line follows from the fact that $|\Rbar^\ast| = |\Rbar|/p^h$ and $|\Rbar| = |X(\Rbar)| |Y(\Rbar)|$. 

Now consider the sequences 
\[ 1 \rightarrow \Phi(F_c) \rightarrow F_c \rightarrow F_c/\Phi(F_c) \rightarrow 1 \quad \text{and} \quad
1 \rightarrow \Rbar \rightarrow F_c \rightarrow G \rightarrow 1. \] 
From the first, we deduce that 
\[ |X_c| = |X(\Phi(F_c))| = |X(F_c)|/|X(F_c/\Phi(F_c))| = |X(F_c)|/p^g \] 
since $\sigma$ acts by inversion on all of $F_c/\Phi(F_c)$. From the second, we have $|X(F_c)| = |X(G)| |X(\Rbar)|$ from which it follows that $|X_c| = |X(G)| |X(\Rbar)|/p^g$. Substitution then yields
\begin{eqnarray*}
\Measc(G) 
&=& \frac{1}{|\Aut_\sigma(G)| |X(G)|} \, p^{g(g+1)} \prod_{k=1}^g (1 - p^{-k}) \prod_{k=1}^h (1 -p^{h - g - 1 - k}) \\
&=& \frac{y(G)}{|\Aut_\sigma(G)| |G|} \, p^{g(g+1)} \prod_{k=1}^g (1 - p^{-k}) \prod_{k'=g + 2 - h}^{g+1} (1 -p^{-k'}) \\
&=& \frac{y(G)}{|\Aut_\sigma(G)| |G|} \, p^{g(g+1)} \left(\frac{ \eta_g(p) \eta_{g+1}(p)}{\eta_{g + 1 - h}(p) }\right) 
\end{eqnarray*}
where $y(G) = |Y(G)|$ and $\eta_j(p) = \prod_{k=1}^j (1 - p^{-k})$.

Now suppose $G$ is also a \Spsg. By definition, $\Meas(G) = \Meas_{c+1}(G)$. The latter can be evaluated by following the same steps as above, but noting that if $G = F/R$ then $P_c(F) \subseteq R$ since $G$ has $p$-class $c$ and this implies $P_{c+1}(F) \subseteq R^\ast$. It follows that $\Rbar/\Rbar^\ast \cong R/R^\ast$, where $\Rbar = R/P_{c+1}(F) \subseteq F_{c+1}$, and so has dimension $r = r(G)$. Thus one simply replaces $h$ with $r$ in the final formula above.

If $G$ is a \Spsag but not a \Spsg, then either $\sigma$ does not act by inversion on $R/R^\ast \cong \Rbar/\Rbar^\ast$ or the dimension $r$ of this space is larger than $g+1$. In either case, none of the tuples in $X_{c+1}^{g+1}$ can generate any of the invariant normal subgroups $\Rbar$ since this would lead to a contradiction. Thus $\Meas(G) = \Meas_{c+1}(G) = 0$.
\end{proof}

\subsection{Some numerical examples.} 
In this section, we illustrate the theory with some numerical examples in the simplest case where $p=3$ and $g=2$.  Let $G$ be a \Spsag of $3$-class $c$ and rank $2$. By Theorem~\ref{thm-main-formula},
\[ 
\Meas_c(G) = 
\frac{432 y(G)}{|\Aut_\sigma(G)| |G|} \,  \prod_{k=4 - h}^3 (1 -3^{-k}) 
\]
where $h = h(G)$. If $G$ is also a \Spsg, then we have
\[ 
\Meas(G) = 
\frac{432 y(G)}{|\Aut_\sigma(G)| |G|} \,  \prod_{k=4 - r}^3 (1 -3^{-k}) 
\]
where $r = r(G)$ is either $2$ or $3$.
\begin{itemize}
\item For $r=3$, the smallest examples of \Spsgs are, in the notation of \magma, SmallGroup$(81,i)$ for $i= 7,8,10$ with measures $1664/6561$, $1664/6561$, $3328/19683$  respectively, and SmallGroup$(243,i)$ for $i=16,18,19,20$ with measures $3328/177147$, $3328/177147,$ $ 1664/59049$, $1664/59049$ respectively. 
\item For $r=2$, the smallest examples of \Spsgs are the groups SmallGroup$(243,i)$ for $i=5,7$, with respective measures $1664/59049$ and $832/59049$.  As a point of comparison, the latter groups are also \Ssgs and would be assigned measures $128/729$ and $64/729$ respectively, in the context of~\cite{BBH}.  Thus our heuristics in the next section will predict different frequencies of occurrence for these groups as $G_K$ when $K$ is a real quadratic field. However, the $2:1$ ratio between the probabilities is preserved and so we expect SmallGroup$(243,5)$ to occur twice as frequently as SmallGroup(243,7) in both the real and imaginary quadratic settings. This is reflected in the available numerical data discussed in Section~\ref{section-data} below and also in \cite[Section 5]{BBH}. 
\end{itemize}

In the examples above, all of the groups $G$ involved have $3$-class $3$ and nuclear rank $0$. This implies $h(G) = r(G)$ and so $\Meas_3(G) = \Meas(G)$ in each case. In general, as noted in Remark~\ref{remark-meas-zero}, it is possible for $\Meas_c(G)$ and $\Meas(G)$ to both be nonzero and not equal. This occurs when $G$ is both a \Spsg and the ancestor of a strictly larger \Spsg. 

As an example of this new phenomenon, consider the group $G =$ SmallGroup$(729,8)$. It is a \Spsag with $3$-class 3, $r(G) = 3$ and $h(G) = 2$. By Theorem~\ref{thm-main-formula},  $\Meas_3(G) = 1664/531441$ and $\Meas(G) = \Meas_4(G) = 3328/1594323$.  By Theorem~\ref{thm-meas-relations}(i), the difference must equal the sum of $\Meas_4(Q)$ over all the \Ssags which are children of $G$. A computation shows that there are $3$ such children $Q$, all of which are \Spsags and each with $\Meas_4(Q) = 1664/4782969$. We then have
\[ 3 \cdot (1664/4782969) = 1664/1594323 = \Meas_3(G) - \Meas(G) \]
as expected. Interestingly, since none of the measures of the children could be omitted from the sum without contradicting the required equality in Theorem~\ref{thm-meas-relations}(i), we see that  we can actually use the theorem to deduce that all $3$ children must be \Spsags without an explicit check. Of course, one must still find a GI-automorphism for each group as a first step in order to be able to evaluate the formula and this can be costly in itself as the groups get larger.

When testing whether a group $G$ is a \Spsag, it is helpful to observe that  necessary conditions include possessing a GI-automorphism and having $h(G) \leq g + 1$ (for the latter condition see \cite{BLG} and \cite{BN}). These conditions do not suffice though and there exist groups which satisfy both, but are not \Spsags analogous to the pseudo-Schur groups of \cite{BBH}. We call such groups {\em pseudo-Schur+1 groups}. 

Returning to Example~\ref{ex-p3g2c2}, we see that the \Spsags $G_1$, $G_2$ and $G_3$ correspond to SmallGroup$(27,3)$, SmallGroup$(81,3)$ and SmallGroup$(243,2)$ respectively in \magma's database. All three have $3$-class $2$ and one computes that $r(G_1) = r(G_2) = 4 > 3$ and $r(G_3) = 5 > 3$ which means  these groups are not \Spsgs and thus we have $\Meas(G_i) = 0$ for $i = 1,2,3$. After computing the nuclear ranks and subtracting, we obtain $h(G_1) = 2$, $h(G_2) = 1$ and $h(G_3) = 0$. Using Theorem~\ref{thm-main-formula}, we see that $\Meas_2(G_1) = 208/243$, $\Meas_2(G_2) = 104/729$ and $\Meas_2(G_3) = 1/729$ and this agrees with the earlier values obtained by explicitly counting tuples of relations.

There are four other groups of $3$-class $2$ that do not arise as \Spsags. These include the abelian groups $\mathbb{Z}/3 \times \mathbb{Z}/9$ and $\mathbb{Z}/9 \times \mathbb{Z}/9$ which are examples of the pseudo-Schur+1 phenomenon discussed above. If one were enumerating tuples of relations in $X_2^3$ and constructing the corresponding quotients of $F_2$, then after encountering the groups $G_1$, $G_2$ and $G_3$ and computing the corresponding values of $\Meas_2$, one would be able to terminate the enumeration and deduce that the list of \Spsags of $3$-class $2$ is complete since
\[ 208/243 + 104/729 + 1/729 = 1 = \Meas_1(\mathbb{Z}/3 \times \mathbb{Z}/3). \]
Note that $\mathbb{Z}/3 \times \mathbb{Z}/3$ is the unique $2$-generated $3$-group of $3$-class $1$. All $2$-generated $3$-groups of larger $3$-class descend from this group.

These computations can be continued. For instance, $G_1 = \mathrm{SmallGroup}(27,3)$ has $11$ children, all of which have a GI-automorphism and all of which are \Spsags. Of these, $5$ are terminal \Spsgs and are the three examples of order $81$ together with the two \Ssgs of order $243$ arising at the start of this subsection. 
Computing $\Meas_3(Q)$ for each of the $11$ children and then summing we obtain $\Meas_2(G_1) = 208/243$, as expected. 
Continuing this process yields a probability distribution on part of O'Brien's rooted tree. The figure below shows more of the tree below SmallGroup$(27,3)$. Each node represents a descendant group $G$ labeled with $\Meas_c(G)$ where $c$ is the $p$-class $G$. The values of $\Meas(G)$ are not listed explicitly, but can be obtained by applying Theorem~\ref{thm-meas-relations}(i). Simply take the value on any node and subtract off the sum (if any) of the measures of its children. \\

\vfill

\pagebreak
\includegraphics[scale=0.62,angle=90]{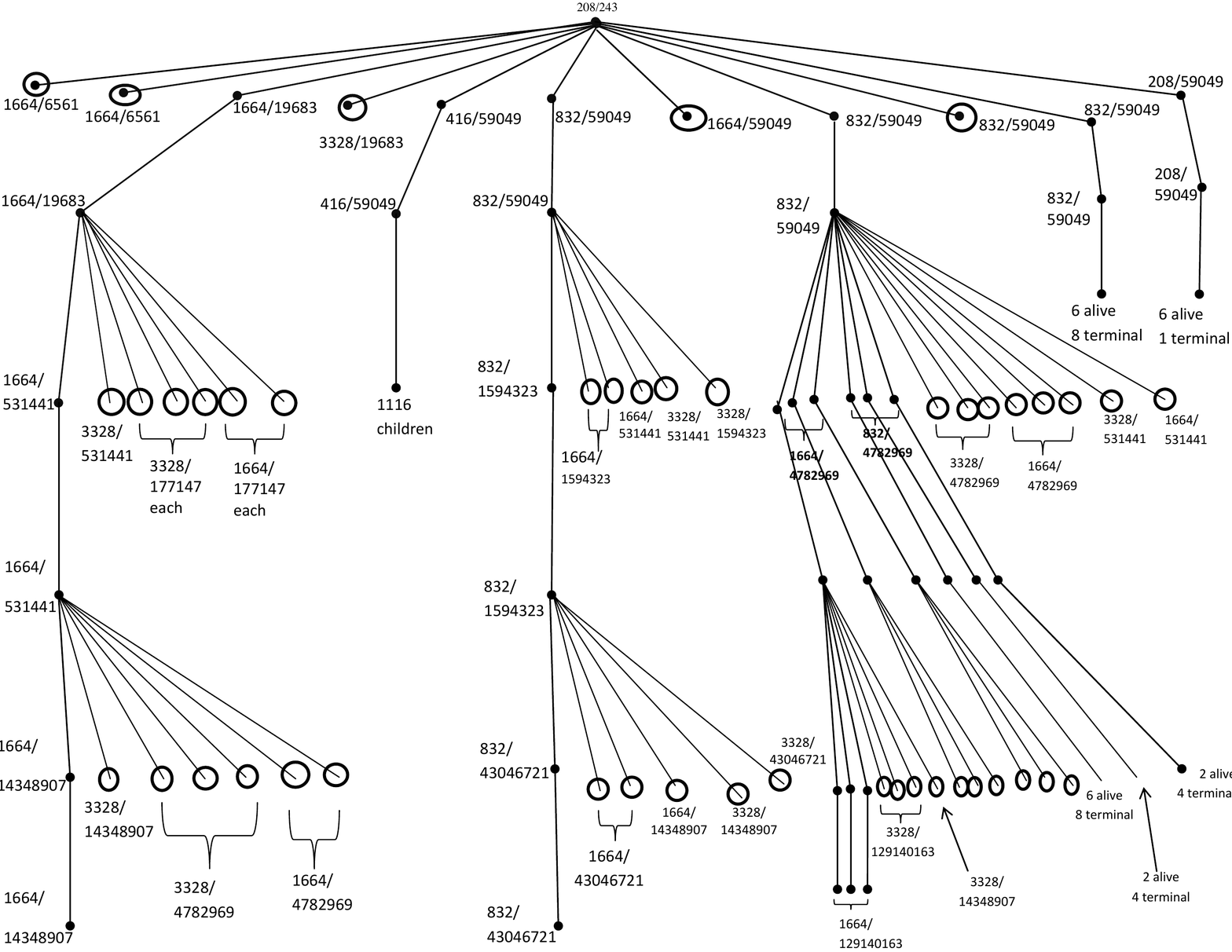}
\pagebreak


\section{Non-abelian Conjectures}\label{section-conjectures}

In this section, we introduce our main heuristic assumption, that the frequency of occurrence of certain groups as $G_K$ (or $G_K/P_c(G_K)$) as $K$ varies among real quadratic fields, is given by the group theoretical measures introduced in Section~2. 

To make this more precise, we introduce some notation. For $x>0$, let $\FF_x$ denote the set of real quadratic fields with discriminant not exceeding $x$.  If $K \in \FF_x$ then we let $A_K$ denote the $p$-class group of $K$ and $G_K$ denote the Galois group of the maximal unramified $p$-extension of $K$. For each natural number $g$, let $\FF_{x,g}$ be the subset of $\FF_x$ consisting of those fields $K$ having $d(G_K) = d(A_K) = g$.  
For pro-$p$ groups $G$ and $H$, define  $\ch_G(H)$ to be $1$ if $H \cong G$ and $0$ otherwise.
\begin{definition}
Let $G$ be a  finitely generated pro-$p$ group with generator rank $g$. We define 
\[ \Freq(G) = \lim_{x \to \infty} \frac{\sum_{K \in \FF_{x,g}}
\ch_G(\G_K)}{\sum_{K\in \FF_{x,g}} 1},
\]
assuming the limit exists. If $G$ is also finite then, for $c \geq 1$, we define
\[ \Freq_c(G) = \lim_{x \to \infty} \frac{\sum_{K \in \FF_{x,g}}
\ch_G(\G_K/P_c(\G_K))}{\sum_{K\in \FF_{x,g}} 1},
\]
assuming the limit exists. 
\end{definition}

We conjecture that the frequencies defined above exist and, more specifically, that
\begin{conjecture}\label{MainConjecture}
For every finite $p$-group $G$, we have 
\begin{eqnarray*}
 \Freq(G) &=&\Meas(G) \\
 \Freq_c(G) &=& \Meas_c(G).
\end{eqnarray*}
In particular, $\Freq(G) \neq 0$ if and only if $G$ is a \Spsg and $\Freq_c(G) \neq 0$ if and only if $G$ is a \Spsag  with $p$-class $c$ or $G$ is a \Spsg with $p$-class at most $c$. 
\end{conjecture}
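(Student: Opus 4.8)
The plan is to recast the conjecture as the assertion that two probability distributions on finite \Spsgs of a fixed generator rank $g$ agree: the \emph{arithmetic} distribution, recording the isomorphism type of $G_K$ (and of its maximal $p$-class $c$ quotient) as $K$ ranges over $\FF_{x,g}$ with $x \to \infty$, and the \emph{group-theoretic} distribution encoded by $\Meas$ and $\Meas_c$. The first step is to observe that the right-hand sides already carry a transparent probabilistic meaning. By construction $\Meas_c(G) = |T_c(G)|/|X_c|^{g+1}$ is exactly the probability that a tuple $v$ chosen uniformly at random from $X_c^{g+1}$ presents $G$, i.e.\ that the random quotient $F_c/\langle v\rangle$ is isomorphic to $G$; Lemma~\ref{lemma_specialpres} guarantees that every \Spsag of $p$-class at most $c$ is realized this way and no other group is, and Theorem~\ref{thm-meas-relations} shows these measures are compatible as $c$ grows, so $\Meas(G)$ is the mass that the limiting random-relations model places on the finite group $G$. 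Thus it suffices to prove that the arithmetic distribution of $G_K$ coincides with this explicit random model.

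To compare the two distributions I would pass to moments. Fix a finite $p$-group $H$ with a GI-automorphism and consider the average of $|\Epi_\sigma(G_K,H)|$ over $K \in \FF_{x,g}$ as $x \to \infty$. On the group side these moments are known in closed form: the count of $\sigma$-equivariant surjections feeding into the proof of Theorem~\ref{thm-main-formula}, via \cite[Lemma~4.6]{BW}, computes exactly the moments of the random-relations model. On the arithmetic side, a $\sigma$-equivariant surjection $G_K \twoheadrightarrow H$ corresponds to an unramified $H$-extension of $K$ whose interaction with $\Gal(K/\Q)$ is prescribed, so the moment is an average, over real quadratic $K$, of the number of such extensions. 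The goal is to prove these arithmetic averages equal the group-theoretic ones, and then to invoke a moment-determinacy principle for measures on finite groups carrying GI-automorphism data to conclude that the full distributions agree. This would yield $\Freq_c(G) = \Meas_c(G)$ for every $c$ simultaneously, and letting $c \to \infty$ would give $\Freq(G) = \Meas(G)$.

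Granting the two displayed equalities, the ``in particular'' clause is immediate from the group-theoretic side. By Remark~\ref{remark-meas-zero}, $\Meas(G) > 0$ exactly when $G$ is a \Spsg, so $\Freq(G) \neq 0$ under the same condition. Likewise $\Meas_c$ is a probability measure supported precisely on the \Spsags of $p$-class exactly $c$ together with the \Spsgs of $p$-class less than $c$ (the support identified just before Theorem~\ref{thm-meas-relations}); hence $\Freq_c(G) \neq 0$ exactly when $G$ is a \Spsag of $p$-class $c$ or a \Spsg of $p$-class at most $c$, which is the stated characterization.

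The hard part --- and the reason this remains a conjecture rather than a theorem --- is the arithmetic moment computation. Controlling the average number of unramified $H$-extensions of real quadratic fields is a number-field counting problem of Malle--Bhargava type that is open beyond the smallest groups $H$; moreover the fundamental unit of infinite order, responsible for the bound $d(U_K/U_K^p) = 1$ in the proof that $G_K$ is a \Spsg and hence for the relation rank ranging over $\{g,g+1\}$, must be tracked carefully, since it is precisely what separates the real case from the imaginary one treated in \cite{BBH}. A secondary difficulty is making the moment-determinacy step rigorous in this pro-$p$ setting: the relevant moments grow, so passing from moment agreement to distributional agreement requires genuine tail and uniformity control for such nonabelian Cohen--Lenstra problems rather than a naive appeal to the classical moment problem.
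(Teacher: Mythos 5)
You were asked to prove a statement that the paper itself does not prove: Conjecture~\ref{MainConjecture} is the paper's central heuristic, and its only support in the text is the consistency check with the Cohen--Lenstra predictions (via Theorems~\ref{thm-meas-vs-meas-ab} and~\ref{main-group-theory-ab}, in the remark following the conjecture) and the numerical IPAD data of Section~\ref{section-data}. So there is no proof to compare yours against, and your proposal --- as you candidly acknowledge --- is not a proof but a reduction of the conjecture to two open problems. The two load-bearing steps, namely (a) computing the asymptotic average of $|\Epi_\sigma(G_K,H)|$ over real quadratic $K$, i.e.\ counting unramified $H$-extensions with prescribed $\Gal(K/\Q)$-action, and (b) a moment-determinacy principle robust enough to recover the distribution from these growing moments, are exactly where all known methods stop: the function-field analogue in \cite{BW} is established only in a large-$q$ limit, and over $\Q$ even the abelian moments for real quadratic fields are largely out of reach. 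The honest verdict is therefore: right framework, genuine gap, and the gap \emph{is} the conjecture --- no argument of this shape can currently close it, which is why the paper states it as a heuristic rather than a theorem.

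The parts of your writeup that can be checked against the paper are essentially correct. Your identification of $\Meas_c$ as the law of $F_c/\langle v \rangle$ for $v$ uniform in $X_c^{g+1}$, the compatibility of the measures under Theorem~\ref{thm-meas-relations}, and the conditional derivation of the ``in particular'' clause from Remark~\ref{remark-meas-zero} together with the support of $\Meas_c$ all match the paper's setup, and the stabilization $\Freq(G)=\Freq_{c}(G)$ for $c$ exceeding the $p$-class of $G$ is sound since the lower $p$-central series of $G_K$ intersects trivially. One slip: in your first paragraph you assert that every \Spsag of $p$-class \emph{at most} $c$ is realized by a tuple in $X_c^{g+1}$; in fact the support of $\Meas_c$ consists of the \Spsags of $p$-class exactly $c$ together with the \Spsgs of $p$-class less than $c$ --- a \Spsag of smaller class that is not itself a \Spsg has $\Meas_c = 0$ by Theorem~\ref{thm-meas-relations}(ii) --- and you state the support correctly in your third paragraph, so this is an internal inconsistency rather than a wrong conclusion. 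Finally, your emphasis on tracking the fundamental unit is well placed: the paper encodes it by sampling $(g+1)$-tuples of relations rather than $g$-tuples, reflecting the Shafarevich bound $r(G)-d(G) \leq d(U_K/U_K^p) = 1$, so any arithmetic moment computation in step (a) would have to see the image of the unit group in $V/(K^\times)^p$ as in the proof of the Lemma in Section~2.1.
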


\begin{remark}
We point out that as a consequence of Conjecture \ref{MainConjecture}, we expect every {\em finite} \Spsg to occur as $\G_K$  for a positive proportion of real quadratic fields $K$.  We also note that our conjectures in the non-abelian setting are compatible with those of Cohen and Lenstra. In particular, if $A$ is an abelian $p$-group, then one can define the frequency $\Freq^{\ab}(A)$ in an analogous way, as the asymptotic proportion of fields for which the $p$-class group is isomorphic to $A$. If $A$ has $p$-class $c$ and we fix $c' > c$, then using the definitions of the measures, Theorem~\ref{thm-meas-vs-meas-ab} and our conjecture above, we have
\[ \Freq^{\ab}(A) = \sum_G \Freq_{c'}(G) = \sum_G \Meas_{c'}(G) = \Meas_{c'}^{\ab}(A) = \Meas^{\ab}(A). \]
where the middle summations are over all \Spsags $G$ with $p$-class at most $c'$ and $G^{\ab} \cong A$. We are implicitly using the fact here that a field $K$ will have $A_K \cong A$ if and only if $G_K/P_{c'}(G_K)$ has abelianization isomorphic to $A$ once $c' > c$. By Theorem~\ref{main-group-theory-ab}, we then have
\[ \Freq^{\ab}(A) = \Meas^{\ab}(A) = \frac{1}{|\Aut(G)||G|}
\,
p^{g(g+1)}
\prod_{k=1}^g (1 - p^{-k}) \prod_{k=2}^{g+1} (1 - p^{-k}).
\]
This is consistent with the predictions made by Cohen and Lenstra in~\cite{CL2} although some manipulations are needed to extract this formula from their work. See the discussion in~\cite[Section~1]{BBH} for more details.
\end{remark}


\section{Index p Abelianization Data}\label{section-ipad-predictions}

To test our prediction of the last section, we would like to be able to compute the Galois group $G_K$ of the $p$-class tower of a given real quadratic field $K$ for many different choices of~$K$. This is hard to do, so we instead focus on collecting more limited information about  $G_K$, namely its abelianization and the abelianizations of its maximal subgroups. By class field theory, this can be done by computing the $p$-class groups of $K$ and its unramified extensions of degree $p$. As in~\cite{BBH}, we call this information the ``Index $p$ Abelianization Data" (or IPAD for short). 

For example, the groups SmallGroup$(81,i)$ for $i = 8,10$ each have IPAD $[[3,3]; [3,3]^3 [3,9]]$, which means that their abelianization is $[3,3]$ and that three of their maximal subgroups have abelianization $[3,3]$ and the other $[3,9]$. It turns out that these are the only \Spsgs with this IPAD. The measures of these groups add up to $8320/19683 = 0.4227$. We therefore expect that just over $42\%$ of real quadratic fields $K$ with $3$-class group of rank $2$ will have one of these two groups as their $G_K$. We call $8320/19683$ the measure of the IPAD.

In general, there may be infinitely many \Spsgs with a given IPAD $\mathcal{I}$, however, if we sum $\Meas_c(G)$ over the \Spsags $G$ of $p$-class at most $c$ with IPAD equal to  $\mathcal{I}$, then this quantity stabilizes for sufficiently large $c$ as explained in~\cite[Section 4]{BBH} and so we take this to be the definition of $\Meas(\mathcal{I})$. In practice, one can often avoid having to compute $\Meas_c(G)$ for large values of $c$ by recalling from~\cite{BBH} that there is a partial ordering on IPADs such that if $H$ is a child of $G$, then the IPAD of $G$ is less than or equal to that of $H$ and such that if their IPADs agree, then all further descendants have the same IPAD (we call such a branch stable). Using Theorem~\ref{thm-meas-relations}, we see that computing $\Meas_c(G)$ for the top node (of $p$-class $c$) in such a stable branch gives the part of the measure for this IPAD which arises from all of the \Spsgs within this branch of the tree.

We now illustrate the ideas above by determining the ten IPADs with largest measure.
\begin{theorem} 
\label{group_theory_predictions}

(1) IPAD $[[3,3]; [3,3]^3 [3,9]]$ has measure $8320/19683 = 0.4227$.

(2) IPAD $[[3,3]; [3,3]^3 [3,3,3]]$ has measure $1664/6561  = 0.2536$.

(3) IPAD $[[3,3]; [3,3]^3 [9,9]]$ has measure $3328/59049 = 0.0564$.

(4) IPAD $[[3,9]; [3,3,3] [3,9]^2 [3,27]]$ has measure $3328/59049 = 0.0564$.

(5) IPAD $[[3,3]; [3,3,3] [3,9]^3]]$ has measure $1664/59049 = 0.0282$.

(6) IPAD $[[3,3]; [3,3]^3 [9,27]]$ has measure $13312/531441 = 0.0250$.

(7) IPAD $[[3,9]; [3,3,9] [3,9]^3]]$ has measure $11648/531441 = 0.0219$.

(8) IPAD $[[3,9]; [3,3,3] [3,3,9] [3,9]^2]]$ has measure $3328/177147 = 0.0188$.

(9) IPAD $[[3,3]; [3,3,3]^2 [3,9]^2]]$ has measure $832/59049 = 0.0141$.

(10) IPAD $[[3,3]; [3,3,3]^3 [3,9]]$ has measure $832/59049 = 0.0141$.

\end{theorem}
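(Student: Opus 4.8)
The plan is to reduce the statement to a finite, certifiable computation on O'Brien's rooted descendant tree, whose root is the elementary abelian group $(\Z/3)^2$ and whose nodes are the \Spsags of rank $2$, using the explicit measure formula of Theorem~\ref{thm-main-formula} together with the recursive relations of Theorem~\ref{thm-meas-relations} and the IPAD ordering recalled above. By Example~\ref{ex-p3g2c2} the root has the three children $G_1,G_2,G_3$, and O'Brien's $p$-group generation algorithm (\cite{O}) produces all further descendants. For each node $G$ of $p$-class $c$ I would (i) exhibit a GI-automorphism $\sigma$ acting by inversion on $H^2(G,\F_p)$, confirming that $G$ is a \Spsag; (ii) compute the invariants $|G|$, $y(G)$, $|\Aut_\sigma(G)|$ and $h(G)$ (and $r(G)$ when $G$ is itself a \Spsg); and (iii) evaluate $\Meas_c(G)$ and $\Meas(G)$ from Theorem~\ref{thm-main-formula}. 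These are finite group-theoretic computations, carried out in \magma.

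The passage from per-group measures to per-IPAD measures is handled by the stable-branch structure. For each node $G$ I would also compute its IPAD, namely $G^{\ab}$ together with the multiset of abelianizations of its four maximal subgroups. Since the IPAD is non-decreasing down the tree and, once two adjacent IPADs coincide, remains constant on the entire subtree below, every branch eventually stabilizes; let $G$ (of $p$-class $c$) be the top node of such a stable branch. Iterating Theorem~\ref{thm-meas-relations}(i) collapses the subtree rooted at $G$: one obtains $\Meas_c(G) = \sum_D \Meas(D)$, the sum running over the \Spsgs $D$ in that subtree, each of which carries the same IPAD $\mathcal I = \mathrm{IPAD}(G)$. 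Hence the contribution of the whole branch to $\Meas(\mathcal I)$ equals the single number $\Meas_c(G)$; a terminal \Spsg is the degenerate case in which the subtree is a single node. Summing $\Meas_c(G)$ over the finitely many stabilization points sharing a given IPAD then yields $\Meas(\mathcal I)$. Executing this for the ten listed IPADs gives the stated fractions, which I would verify reduce as claimed; for instance item~(1) assembles as $1664/6561 + 3328/19683 = 8320/19683$ from the two terminal groups SmallGroup$(81,8)$ and SmallGroup$(81,10)$.

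The main obstacle is \emph{completeness}: proving that the ten IPADs found really are the ten of largest measure, given that the tree is infinite and a single IPAD may accrue measure from stable branches occurring at several different depths. I would control this using the fact that the measures form a probability distribution, $\sum_G \Meas(G) = 1$ over rank-$2$ \Spsgs, this being the total mass $\Meas_1((\Z/3)^2) = 1$ of the root as reflected in the identity $208/243 + 104/729 + 1/729 = 1$ of the preceding subsection. Descending to a fixed depth partitions the total mass into the mass already resolved into stabilized IPADs, which furnishes a lower bound $m(\mathcal I)$ for each IPAD seen, and the residual mass carried by the not-yet-stabilized frontier nodes, which furnishes an explicit upper bound $\delta$ on how much any single IPAD can still gain. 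The true measure of each IPAD then lies in $[\,m(\mathcal I),\, m(\mathcal I)+\delta\,]$, and the ranking is certified once the tenth-largest $m(\mathcal I)$ exceeds the eleventh-largest by more than $\delta$, since then neither the frontier mass nor any as-yet-unseen IPAD can disturb the top ten. One therefore descends just far enough to separate the tenth and eleventh candidates, at which point the enumeration terminates and the list is proved complete.
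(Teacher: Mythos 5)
Your proposal is correct and, at its computational core, coincides with the paper's proof: enumerate O'Brien's descendant tree rooted at $(\Z/3)^2$, evaluate $\Meas_c$ and $\Meas$ via Theorem~\ref{thm-main-formula}, collapse stable branches using Theorem~\ref{thm-meas-relations}(i), and prune using the monotonicity of IPADs down the tree; your assembly of item~(1) as $1664/6561+3328/19683=8320/19683$ from SmallGroup$(81,8)$ and SmallGroup$(81,10)$ is exactly what the paper does. Where you genuinely differ is in how the ``top ten'' claim is certified. The paper argues branch by branch: for each listed IPAD it identifies every node whose IPAD is small enough (in the partial order) to feed into it, and for all other branches it checks explicitly that the accrued measure, plus the residual measure of that branch, is too small to make the list. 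You globalize this into a single mass-balance criterion: stabilized mass gives lower bounds $m(\mathcal I)$, the total frontier mass $\delta$ bounds any IPAD's possible future gain, and the ranking is certified once the tenth-largest $m(\mathcal I)$ exceeds the eleventh-largest by more than $\delta$. This is a cleaner and more mechanical stopping rule, but it leaves two points implicit that you should make explicit. First, the theorem asserts exact values, not merely a ranking; for exactness you need, beyond the $\delta$-gap, that no frontier node has IPAD $\leq \mathcal I_k$ for any of the ten listed IPADs, so that no further mass can flow into them --- this is precisely the paper's repeated ``no other groups have small enough IPAD'' check, and it is attainable by the same monotonicity since only finitely many IPADs lie below a given one. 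Second, termination of your procedure requires $\delta$ to actually fall below the tenth--eleventh gap at some finite depth; this is not automatic a priori, since unresolved mass could in principle persist along paths converging to infinite \Spsgs whose IPADs never stabilize, though the paper's per-branch accounting (equivalently, your computation once executed) confirms it for this tree. Both points are repairable with tools you already invoke, so your argument is a valid, somewhat more systematic, variant of the paper's.
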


\begin{proof}

The groups whose IPAD begins $[3,3]$ are the descendants of SmallGroup$(27,3)$, and so the reader is referred to the earlier figure displaying these. It has $11$ children, all of which are \Spsags,
and as noted above, $5$ are terminal. The 2nd and 4th of the $11$ have IPAD(1), as does the nonterminal 3rd child. Its only child, which is a \Spsag, has IPAD $[[3,3]; [3,3]^3 [9,9]]$. No other groups have small enough IPAD to produce IPAD(1) and so this establishes (1) above.

The child with IPAD $[[3,3]; [3,3]^3 [9,9]]$ has $7$ children of its own, $6$ terminal, of which $3$ contribute to IPAD(3) and $3$ to IPAD(6). The one nonterminal
child has a single \Spsag as a child. Its IPAD is $[3,3]; [3,3]^3 [27,27]]$. In this way, we exhaust all possibilities and so establish (3) and (6)
above. This branch appears to be following a simple pattern so that we conjecture $[[3,3]; [3,3]^3 [3^k,3^k]]$ will have measure $3328/3^{3k+4}$ and $[[3,3]; [3,3]^3 [3^k,3^{k+1}]]$ measure $13312/3^{3k+6}$ (for $k \geq 2$).

As for IPAD(2), SmallGroup$(81,7)$ is terminal and is the only one of the $11$ children whose IPAD only involves $3$s and so (2) is established.
As for IPAD(5), SmallGroup$(243,5)$ accounts for this. The $8$th child might also have contributed, since it has the same IPAD,
but it is not a \Spsg itself and only one of its children is a \Spsag and has larger IPAD, so that takes care of (5). This in turn has $14$ children, of which $8$ are terminal. Of
these $2$ contribute measure $1664/177147$ to IPAD $[[3,3]; [3,3,3] [3,9]^2 [9,9]]$ and the remaining $6$, plus $4$ of the nonterminal
children whose subsequent branches are stable, measure $6656/1594323$ to IPAD $[[3,3]; [3,3,3] [3,9]^2 [9,27]]$. In both cases,
this is too small to make the top ten list above, as is the remaining measure once this is accounted for.

SmallGroup$(243,7)$ accounts for IPAD(9). The $5$th child might also have contributed since it has the same IPAD, but it is not a \Spsg itself and only one of its children is a \Spsag and has larger IPAD.This in turn has $1116$ children which are \Spsags.  Their IPADs do not make the top ten list above. As for IPAD(10),
the $6$th child and its only child which is a \Spsag have this, showing that this is a stable branch.

The $10$th and $11$th children of SmallGroup$(27,3)$ both have IPAD $[3,3]; [3,9]^4]$. The $11$th (but not the $10$th) leads to a
stable branch, but this only yields measure $208/59049$ for that IPAD which is too small to make our list. The $10$th produces some terminal groups with IPADs
$[[3,3]; [3,9]^3 [9,9]],$ $ [[3,3]; [3,9]^3 [9,27]]$, and so on, but their measures do not make our list.

As for the IPADs starting $[3,9]$, these must come from descendants of SmallGroup$(81,3)$, which has $31$ children, all \Spsags.
The $8$th and $9$th of these are terminal (and are SmallGroup$(243,i)$ for $i=19, 20$) and account for IPAD(4). No other child
has small enough IPAD to contribute. The $5$th child (SmallGroup$(243,16)$) is terminal and contributes $3328/177147$ to IPAD(7). The $3$rd
and $4$th children also have the same IPAD. The branch of the $3$rd child is stable (and terminates soon after) so it contributes its measure, $1664/531441$,
whereas the $4$th group is not a \Spsg and its only child which is a \Spsag has larger IPAD and so contributes nothing.

The $7$th child  (SmallGroup$(243,18)$) is terminal and accounts for IPAD(8). The $6$th child might also have contributed, but it is not a \Spsg itself and only one 
of its children is a \Spsag and has larger IPAD. There are two other terminal children of SmallGroup$(81,3)$, namely SmallGroup$(729,i)$ for $i=14,15$. From these,
$[[3,9]; [3,3,9]^2 [3,27]^2]]$ acquires measure $3328/531441$, not enough to make the list. Another child  (SmallGroup$(729,13)$) which is not a \Spsg has the same IPAD, but
its children have larger IPADs.

None of the other children of  SmallGroup$(81,3)$ or of the third group of $3$-class $2$ produce IPADs with measure
large enough to make the list.
\end{proof}


\section{Numerical Data}\label{section-data}
As evidence for our conjectures we have collected numerical data in the case of the smallest odd prime $p = 3$ and generator rank $g = 2$. In particular, we have computed the four unramified cyclic extensions of degree $3$ over $K$ and their $3$-class groups (assuming GRH) for all real quadratic fields $K$ with $3$-class group of rank $2$ and discriminant $d_K$ satisfying $d_K < 10^9$.  By class field theory, this yields the IPAD for the Galois group $G_K$ for each of these fields. The calculations were carried out indirectly by using existing methods to enumerate non-cyclic cubic extension of $\mathbb{Q}$. See~\cite[Section~5]{BBH} for more details.

The computations were implemented  using both the symbolic algebra package
\pari~\cite{pari}, version 2.5.4 and \magma~\cite{magma}, version~2.19-5 running on $2 \times 2.66$ GHz 6-Core Intel Xeon processors under OS X 10.8.5. 
The computations were run in parallel across multiple cores by dividing up the discriminants into subintervals and then searching through a space of potential defining polynomials. Roughly $3000$ core hours were used in total.

We now give tables summarizing the data collected. In each table, we have broken down the interval of discriminants $d_K$ with $1 < d_K <  10^9$ into five nested subintervals $I_j$  where $I_j = \{ d_K \mid  1 \leq d_K \leq j \cdot 10^7  \}$ and we have selected values of $j$ so that the length of each successive subinterval is scaled by a factor of $\sqrt{10} \approx 3.2$.

The first table is a census of the most common IPADs. The second lists their relative proportions obtained by dividing through by the total number of fields examined in each column. In addition, the last column of the second table lists the values predicted by our heuristics as computed in Theorem~\ref{group_theory_predictions}.  As in \cite{BBH}, there are two IPADs which each determine the isomorphism type of a single group. These appear in lines 5 and 9 of Table~2 and correspond to the groups SmallGroup(243,5) and SmallGroup(243,7) respectively.  Thus, on these two lines, the predicted and computed frequencies for an individual group can be compared, providing a direct test of our non-abelian heuristics.


{\footnotesize
\begin{table}[htp]
\caption{Census of the most common IPADs.}
\begin{center}

\begin{tabular}{|l |c|c|c|c|c|}
\hline
 & $ I_1$ & $ I_{3.2}$ & $ I_{10}$ & $ I_{32}$ & $ I_{100}$\\
\hline
$[3,3]$; $[3,3]^3$ $[3,9]$   & 1382 & 5035 & 17618 & 61826 & 208236 \\
\hline
$[3,3]$; $[3,3]^3$ $[3,3,3]$   & 698 & 2813 & 10244 & 36285 & 122955 \\
\hline
$[3,3]$; $[3,3]^3$ $[9,9]$   & 150 & 623 & 2180 & 7869 & 26678 \\
\hline
$[3,9]$; $[3,3,3]$ $[3,9]^2$ $[3,27]$   & 135 & 541 & 2141 & 7831 & 26748 \\
\hline
$[3,3]$; $[3,3,3]$ $[3,9]^3$   & 93 & 323 & 1122 & 3993 & 13712 \\
\hline
$[3,3]$; $[3,3]^3$ $[9,27]$   & 72 & 242 & 955 & 3444 & 11780 \\
\hline
$[3,9]$; $[3,3,9]$ $[3,9]^3$   & 45 & 211 & 805 & 2970 & 10373 \\
\hline
$[3,9]$; $[3,3,3]$ $[3,3,9]$ $[3,9]^2$   & 32 & 164 & 718 & 2535 & 8733 \\
\hline
$[3,3]$; $[3,3,3]^2$ $[3,9]^2$   & 47 & 156 & 546 & 1987 & 6691 \\
\hline
$[3,3]$; $[3,3,3]^3$ $[3,9]$   & 27 & 123 & 493 & 1901 & 6583 \\
\hline
Other IPADs (175 types)  & 189 & 778 & 2969 & 11142 & 39267 \\
\hline
\hline
Total   & 2870 & 11009 & 39791 & 141783 & 481756 \\
\hline
\end{tabular}
\end{center}
\end{table}
}


{\footnotesize
\begin{table}[htp]
\caption{Relative proportions of the most common IPADs.}
\label{last-table}
\begin{center}

\begin{tabular}{|l |c|c|c|c|c|c|}
\hline
 & $ I_1$ & $ I_{3.2}$ & $ I_{10}$ & $ I_{32}$ & $ I_{100}$& Predicted \\
\hline
$[3,3]$; $[3,3]^3$ $[3,9]$   & 0.4815 & 0.4574 & 0.4428 & 0.4361 & 0.4322 & 
0.4227 \\
\hline
$[3,3]$; $[3,3]^3$ $[3,3,3]$   & 0.2432 & 0.2555 & 0.2574 & 0.2559 & 0.2552 & 
0.2536 \\
\hline
$[3,3]$; $[3,3]^3$ $[9,9]$   & 0.0523 & 0.0566 & 0.0548 & 0.0555 & 0.0554 & 
0.0564 \\
\hline
$[3,9]$; $[3,3,3]$ $[3,9]^2$ $[3,27]$   & 0.0470 & 0.0491 & 0.0538 & 0.0552 & 
0.0555 & 0.0564 \\
\hline
$[3,3]$; $[3,3,3]$ $[3,9]^3$   & 0.0324 & 0.0293 & 0.0282 & 0.0282 & 0.0285 & 
0.0282 \\
\hline
$[3,3]$; $[3,3]^3$ $[9,27]$   & 0.0251 & 0.0220 & 0.0240 & 0.0243 & 0.0245 & 
0.0250 \\
\hline
$[3,9]$; $[3,3,9]$ $[3,9]^3$   & 0.0157 & 0.0192 & 0.0202 & 0.0209 & 0.0215 & 
0.0219 \\
\hline
$[3,9]$; $[3,3,3]$ $[3,3,9]$ $[3,9]^2$   & 0.0111 & 0.0149 & 0.0180 & 0.0179 & 
0.0181 & 0.0188 \\
\hline
$[3,3]$; $[3,3,3]^2$ $[3,9]^2$   & 0.0164 & 0.0142 & 0.0137 & 0.0140 & 0.0139 & 
0.0141 \\
\hline
$[3,3]$; $[3,3,3]^3$ $[3,9]$   & 0.0094 & 0.0112 & 0.0124 & 0.0134 & 0.0137 & 
0.0141 \\
\hline
Other IPADs (175 types)  & 0.0659 & 0.0707 & 0.0746 & 0.0786 & 0.0815 & 0.0888\\
\hline
\end{tabular}
\end{center}
\end{table}
}

\noindent{\sc Acknowledgements.}  We acknowledge useful correspondence 
and conversations with Melanie Matchett Wood. 

\pagebreak


\def\Sset{\Omega}   
\def\Ssetab{\Omega^{\ab}}
\def\hatS{\hat{\Sset}}
\def\hatSab{\hat{\Sset}^{\ab}}


\begin{center}
\begin{Large}
{Appendix: Measures for infinite groups}
\end{Large}
\end{center}

In the current paper and also in~\cite{BBH}, we initially avoided the issue of assigning measures to groups which are infinite. Given the limitations one immediately encounters when trying to test the conjectures as stated, this seems like a small omission. On the other hand, it would be nice to have a consistent theoretical framework for assigning probabilities to all groups that may arise, even if testing the conjectures for individual infinite groups seems out of reach currently. We will now show how this can be carried out for \Spsgs\!\!. The same ideas can be easily modified to extend the measure introduced for \Ssgs in~\cite{BBH}.

Fix an odd prime $p$ and positive integer $g$.
Let $\Sset$ denote the set of all \Spsgs with generator rank $g$ (up to isomorphism). For $c \geq 1$, let $\Sset_c = \{ G_c \mid G \in \Sset \}$. Note that although the set $\Sset$ may be infinite and contain infinite pro-$p$ groups as elements, the set $\Sset_c$ is always finite and contains only finite $p$-groups with $p$-class at most $c$.  In Section~2,  we have introduced a function $\Meas(G)$ which is defined only for the {\em finite} groups $G$ in $\Sset$. We will now show that this function can be extended to cover all elements of $\Sset$ by using some standard results in measure theory to define a measure on an appropriate $\sigma$-algebra. (Here we encounter an unfortunate notational conflict since this use of $\sigma$ has nothing to do with the $\sigma$-automorphisms of the groups involved.)

We start by considering the functions $\Meas_c: \Sset_c \rightarrow [0,1]$ for $c \geq 1$ that were  introduced in Section~2. Each of these functions can be extended from individual elements to subsets by summation and thus each gives rise to a measure defined on the (finite) power set algebra $\mathcal{P}(\Sset_c)$. We have a natural map $X_c^{g+1} \rightarrow \Sset_c$ defined by $v \mapsto F_c / \langle v \rangle$. This map is surjective (by Lemma~2.5) and $\Meas_c$ is simply the probability measure which results from pushing forward the uniform counting measure for $X_c^{g+1}$ along this map. 

We now focus our attention on these uniform counting measures. For each $c$, we have a natural map $\psi_c: X^{g+1} \rightarrow X_c^{g+1}$. Let $\mathcal{A}_c \subseteq \mathcal{P}(X^{g+1})$ denote the algebra which results by taking the inverse image of the algebra $\mathcal{P}(X_c^{g+1})$ under $\psi_c$. Let $\mu_c: \mathcal{A}_c \rightarrow [0,1]$ then denote the probability measure that results from pulling back the uniform counting measure along $\psi_c$. i.e. for $A \in \mathcal{A}_c$, we have
\[  \mu_c(A) =   \frac{|\psi_c(A)|}{|X_c|^{g+1}}.
\]
Observe that the family of algebras $\{ (\mathcal{A}_c, \mu_c) \}_{c =1}^\infty$ form a nested sequence inside $\mathcal{P}(X^{g+1})$. The associated probability measures  $\mu_c$ are compatible in the sense that if $A \in \mathcal{A}_c \subseteq \mathcal{A}_{c+1}$ then $\mu_{c+1}(A) = \mu_c(A)$. This follows from the fact that the fibers of the natural projection $X_{c+1}^{g+1} \rightarrow X_c^{g+1}$ are uniform in size. This is explained in the proof of \cite[Theorem~2.11]{BBH}. It is due to the fact that this projection fits into a commuting square in which the other maps also have constant fibers. This observation will recur in our discussion of Figures~\ref{comm-sq1} and~\ref{comm-cube} below.

Define $\mu:  \cup_{c = 1}^\infty \mathcal{A}_c \rightarrow [0,1]$ by $\mu(A) = \mu_c(A)$ if $A \in \mathcal{A}_c$. The map $\mu$ is well defined by our previous observation. Now let $\mathcal{A}$ be the smallest $\sigma$-algebra containing $\cup_{c = 1}^\infty \mathcal{A}_c$. By the Carath\'eodory Extension Theorem, the map $\mu$ can be extended to $\mathcal{A}$ provided that the following conditions hold:
\begin{itemize}
\item[(i)] $\mu(\emptyset) = 0$.
\item[(ii)] If $A \in \cup_{c = 1}^\infty \mathcal{A}_c$ and we have $A = \cup_{i=1}^\infty A_i$ where $\{ A_i \}$ is a collection of pairwise disjoint elements also in $\cup_{c = 1}^\infty \mathcal{A}_c$, then
\[ \mu(A) = \sum_{i=1}^\infty \mu(A_i). \]
\end{itemize}
The fact that Condition~(i) holds is clear. Condition~(ii) can be reformulated as:
\begin{itemize}
\item[$\text{(ii)}^\prime$] If $\{ B_i \}$ is a sequence of elements in $\cup_{c = 1}^\infty \mathcal{A}_c$ satisfying $B_i \supseteq B_{i+1}$ for $i \geq 1$ and $\cap_{i = 1}^\infty B_i = \emptyset$ then
\[   \lim_{i \rightarrow \infty} \mu(B_i) = 0. \]
\end{itemize}
This  reformulation can now be verified using topological considerations. The set $X$ is closed inside the free pro-$p$ group $F$ and hence both it and the product space $X^{g+1}$ are compact.  The maps from $X^{g+1}$ to the finite discrete spaces $X_c^{g+1}$ are continuous so it follows that all the elements of the algebras $\mathcal{A}_c$ and hence of $\cup_{c = 1}^\infty \mathcal{A}_c$ are compact inside $X^{g+1}$. If $\{ B_i \}$ is a sequence of elements in $\cup_{c = 1}^\infty \mathcal{A}_c$ satisfying $B_i \supseteq B_{i+1}$ for $i \geq 1$ and $\cap_{i = 1}^\infty B_i = \emptyset$, then using compactness we see that some finite intersection must be empty. i.e. there exists $n \in \mathbb{N}$ such that
\[ \emptyset = \bigcap_{i = 1}^n B_i = B_n. \]
But then,  for all $i \geq n$, we have $B_i = B_n = \emptyset$ and $\mu(B_i) = 0$ which implies  $\lim_{i \rightarrow \infty} \mu(B_i) = 0$ as desired.

Having established that $\mu$ can be extended to $\mathcal{A}$, we now wish to use $\mu$ to define a measure on an associated space of groups. Let $\hatS = \{ F/\langle v \rangle \mid v \in X^{g+1} \}$. Note that $\hatS$ is strictly larger than $\Sset$ since a quotient $F/\langle v \rangle$ may have relation rank strictly less than both $g$ and $g+1$. As an extreme example, if every component in $v$ is the identity element then $F/\langle v \rangle \cong F$ showing that $\hatS$ contains the free group $F$. We will return to this issue shortly.

Let $\eta: X^{g+1} \rightarrow \hatS$ be the map $v \mapsto F/\langle v \rangle$. Defining $E \subseteq \hatS$ to be measurable if $\eta^{-1}(E) \in \mathcal{A}$ we obtain a $\sigma$-algebra $\hat{\mathcal{B}}$ on $\hatS$. We can then push $\mu$ forward along $\eta$ to obtain a measure $\Meas: \hat{\mathcal{B}} \rightarrow [0,1]$ by defining $\Meas(E) = \mu(\eta^{-1}(E))$. 
Observe that if $G \in \hatS$ then the singleton set $\{ G \}$ belongs to $\hat{\mathcal{B}}$. This follows from the fact that $G \cong \varprojlim G_c$. In more detail, if we let $f_c: \hatS \rightarrow \Sset_c$ denote the natural map which sends $G \mapsto G_c$ then one can check that this function is measurable with respect to the respective $\sigma$-algebras $\hat{\mathcal{B}}$ and $\mathcal{P}(\Sset_c)$. It then follows that $\{ G \} = \cap_{c=1}^\infty f_c^{-1}(G_c) \in \hat{\mathcal{B}}$ and we see that
\[ \Meas(\{ G \}) = \lim_{c \rightarrow \infty} \mu(f_c^{-1}(G_c)) =  \lim_{c \rightarrow \infty} \Meas_c(G_c). \]
In particular, if $G$ is finite then we have $G \cong G_c$ once $c$ is sufficiently large and we obtain the same value for $\Meas(G) := \Meas(\{ G \})$ as specified in Definition~\ref{def-meas}. Thus this new definition of $\Meas$ extends the old one.

We finish by showing that $\Sset \in \hat{\mathcal{B}}$ and that $\Meas(\Sset) = 1$, equivalently, the complement $\hatS - \Sset \in \hat{\mathcal{B}}$ and $\Meas(\hatS - \Sset) = 0$. In particular, if $G \in \hatS - \Sset$ then $\Meas(G) = 0$. This is  desirable for our applications since the groups in $\hatS - \Sset$ should never arise as Galois groups of the extensions we are considering. If we then define $\mathcal{B} = \{ E \cap \Sset \mid E \in \hat{\mathcal{B}} \}$ and restrict $\Meas$ to $\mathcal{B}$, we obtain a probability measure on $\Sset$. As noted above, this definition of $\Meas$ extends our earlier definition which was restricted to individual finite groups in $\Sset$.

First, we note that all of the constructions above can be carried out in the abelian setting. Recall from Section~\ref{subsection-ab-measure} that $X^{\ab} = \Phi(F^{\ab})$. As before, we construct an algebra $\cup_{c = 1}^\infty \mathcal{A}_c^{\ab}$ and then $\sigma$-algebra $\mathcal{A}^{\ab}$ on $(X^{\ab})^{g+1}$ with accompanying measure $\mu^{\ab}$. One can then see that the natural reduction map $X^{g+1} \rightarrow (X^{\ab})^{g+1}$ is measurable and compatible with the measures $\mu$ and $\mu^{\ab}$. This follows ultimately from the observation that the square in Figure~\ref{comm-sq1} commutes and that all of the maps between the finite sets appearing there have fibers which are constant in size (for all $c \geq 1$). This in turn follows since this square is the front face of the cube in Figure~\ref{comm-cube}. Note that the maps on the back face of the cube are all induced by natural epimorphisms either from $F_c$ to $F_c^{\ab}$ or $F_{c+1}$ to $F_c$. The maps $\phi_\ast$ connecting the front and back faces are not homomorphisms, but do have fibers of constant size as discussed in Lemma~\ref{phi-map1}. The $g+1$ components of each map $\phi_\ast$ have the form $t \mapsto t^{-1} \sigma(t)$. For the abelian objects, this simplifies to $t \mapsto t^{-2}$.

\begin{figure}
\begin{tikzcd}
X_{c+1}^{g+1} \ar{rr}{} \ar{dd}{} 
  & 
  &(X_{c+1}^{\ab})^{g+1} \ar{dd}{}\\
& & \\
X_{c}^{g+1} \arrow{rr}{} 
  & 
  &(X_{c}^{\ab})^{g+1}
\end{tikzcd}
\caption{}
\label{comm-sq1}
\end{figure}

\begin{figure}
\begin{tikzcd}
\Phi(F_{c+1})^{g+1} \ar{dd} \ar{rd}[sloped, near start]{\phi_{c+1}} \ar{rr} 
  & 
   &\Phi(F_{c+1}^{\ab})^{g+1} \ar{dd} \ar{rd}[sloped, near start]{\phi_{c+1}^{\ab}} \\
&X_{c+1}^{g+1} \ar[crossing over]{rr}  
  & 
  & (X_{c+1}^{\ab})^{g+1} \ar{dd}\\
\Phi(F_{c})^{g+1} \ar{rd}[sloped, near start]{\phi_{c}} \ar{rr} 
  & 
  &\Phi(F_{c}^{\ab})^{g+1} \ar{rd}[sloped, near start]{\phi_{c}^{\ab}} \\
&X_{c}^{g+1} \ar[crossing over, leftarrow]{uu}\ar{rr} 
  & 
  &(X_{c}^{\ab})^{g+1}
\end{tikzcd}
\caption{}
\label{comm-cube}
\end{figure}

Now define the corresponding space of groups in the abelian setting by
\[ \hatSab = \{ F^{\ab}/\langle v \rangle \mid v \in (X^{\ab})^{g+1} \} = \{ G^{\ab} \mid G \in \hatS\} \] 
and let $\eta: (X^{\ab})^{g+1} \rightarrow \hatSab$ be the map $v \mapsto F^{\ab}/\langle v \rangle$. Defining $E \subseteq \hatSab$ to be measurable if $\eta^{-1}(E) \in \mathcal{A}^{\ab}$ we obtain a $\sigma$-algebra $\hat{\mathcal{B}}^{\ab}$ on $\hatSab$. We can then push $\mu^{\ab}$ forward along $\eta$ to obtain a measure $\Meas^{\ab}$ on $\hat{\mathcal{B}}$ by defining $\Meas^{\ab}(E) = \mu^{\ab}(\eta^{-1}(E))$. As with the measure $\Meas$ on $\hatS$, this definition of $\Meas^{\ab}$ extends the one given in Definition~\ref{def-meas-ab}.

Let $\alpha: \hatS \rightarrow \hatSab$ be the map  $G \mapsto G^{\ab}$.  Define $\Ssetab = \alpha(\Sset)$ and observe that $\Ssetab$ consists of all {\em finite} abelian $p$-groups. Further, $\alpha^{-1}(\Ssetab) = \Sset$. This follows since if $G \in \hatS - \Sset$, then $r(G) < g$ and $G^{\ab}$ must have at least one infinite cyclic component $\mathbb{Z}_p$. The map $\alpha$ is surjective and measurable and we have $\Meas(\alpha^{-1}(E)) = \Meas^{\ab}(E)$ for all $E \in \hat{\mathcal{B}}^{\ab}$ since $\alpha$ forms part of the commuting square in Figure~\ref{comm-sq2}. Thus to show $\Sset$ is measurable with $\Meas(\Sset) = 1$, it suffices to show that $\Ssetab$ is measurable with $\Meas^{\ab}(\Ssetab) = 1$.

\begin{figure}
\begin{tikzcd}
X^{g+1} \ar{rr}{\eta} \ar{dd}{} 
  & 
  &\hatS \ar{dd}{\alpha}\\
& & \\
(X^{\ab})^{g+1} \arrow{rr}{\eta} 
  & 
  &\hatSab
\end{tikzcd}
\caption{}
\label{comm-sq2}
\end{figure}

By definition, this reduces to verifying that $\eta^{-1}(\Ssetab) \subseteq (X^{\ab})^{g+1}$ is measurable with measure $1$ under $\mu^{\ab}$. This can be seen by first noting that
$(X^{\ab})^{g+1} = \Phi(F^{\ab})^{g+1}$ is a compact abelian group. The measure $\mu^{\ab}$ is translation invariant and hence is a Haar measure, normalized so that $\mu^{\ab}( \Phi(F^{\ab})^{g+1}) = 1$. Since $F^{\ab} \cong \mathbb{Z}_p^g$, we have $\Phi(F^{\ab}) \cong (p \mathbb{Z}_p)^g  \cong \mathbb{Z}_p^g$, and so the elements of $\Phi(F^{\ab})^{g+1}$ can be viewed as $(g+1) \times g$ matrices with entries in $\mathbb{Z}_p$. In particular, the elements of $\eta^{-1}(\Ssetab)$ are the matrices of full rank and it is a standard fact that these have measure $1$ with respect to this Haar measure.

\end{document}